\documentclass[11pt]{article}
\usepackage[tbtags]{amsmath}
\usepackage{amssymb}
\usepackage{amsthm}
\usepackage[misc]{ifsym}
\usepackage{graphicx}
\usepackage{tikz}
\usetikzlibrary{shapes,arrows}
\usepackage{cases}
\numberwithin{equation}{section}
\newtheorem{definition}{Definition}[section]
\newtheorem{theorem}{Theorem}[section]
\newtheorem{lemma}{Lemma}[section]

\setlength{\textwidth}{160mm} \setlength{\textheight}{218mm}
\oddsidemargin=2 mm \topskip 0.5cm \topmargin=-0.5in
\normalsize
\usepackage{hyperref,amsmath,amssymb,amscd}
\title{\bf A Stackelberg Game of Backward Stochastic Differential Equations with Applications
\thanks{This work is supported by National Key R\&D Program of China (Grant No. 2018YFB1305400) and National Natural Science Foundations of China (Grant No. 11571205, 11831010).}}
\author{\normalsize Yueyang Zheng\thanks{\it School of Mathematics, Shandong University, Jinan 250100, P.R. China, E-mail: zhengyueyang0106@163.com} , Jingtao Shi\thanks{\it School of Mathematics, Shandong University, Jinan 250100, P.R. China, E-mail: shijingtao@sdu.edu.cn}}
\date{}
\newtheorem{mypro}{Proposition}[section]

\newtheorem{Remark}{Remark}[section]
\begin{document}
\maketitle

\noindent{\bf Abstract:}\quad This paper is concerned with a Stackelberg game of backward stochastic differential equations (BSDEs), where the coefficients of the backward system and the cost functionals are deterministic, and the control domain is convex. Necessary and sufficient conditions of the optimality for the follower and the leader are first given for the general problem, by the stochastic maximum principles of BSDEs and forward-backward stochastic differential equations (FBSDEs), respectively. Then a linear-quadratic (LQ) Stackelberg game of BSDEs is investigated under standard assumptions. The state feedback representation for the optimal control of the follower is first given via two Riccati equations. Then the leader's problem is formulated as an optimal control problem of FBSDE with the control-independent diffusion term. Two high-dimensional Riccati equations are introduced to represent the state feedback for the optimal control of the leader. The solvability of the four Riccati equations are discussed. Theoretic results are applied to an optimal consumption rate problem of two players in the financial market.

\vspace{1mm}

\noindent{\bf Keywords:}\quad Stackelberg differential game, backward stochastic differential equation, maximum principle, linear-quadratic control, optimal consumption rate
\vspace{1mm}

\noindent{\bf Mathematics Subject Classification:}\quad 93E20, 49K45, 49N10, 49N70, 60H10

\section{Introduction}

The Stackelberg game, also known as the leader-follower game, has been an active research topic in recent years. Among various dynamic games, the Stackelberg game has the hierarchical structure of decision making between the players, which are appealing in both theory and applications. The Stackelberg solution to the game is achieved when one of the two players is forced to wait until the other player announces his strategy, before making his own decision. Problems of such sequential nature arise frequently in economics, where decisions must be made by two parties and one of them is subordinated to the other, and hence must wait for the other party's decision before formulating its own.

The research of Stackelberg game can be traced back to the pioneering work by Stackelberg \cite{S52} in static competitive economics. Simann and Cruz \cite{SC73} studied the dynamic LQ Stackelberg differential game, and the Stackelberg solution was expressed in terms of Riccati equations. Bagchi and Basar \cite{BB81} investigated the stochastic LQ Stackelberg differential game, where the diffusion term of the state equation does not contain the state and control variables. Existence and uniqueness of its Stackelberg solution are established, and the leader's optimal strategy is solved as a nonstandard stochastic control problem and is shown to satisfy a particular integral equation. Yong \cite{Yong02} extended the stochastic LQ Stackelberg differential game to a rather general framework, where the coefficients could be random matrices, the control variables could enter the diffusion term of the state equation and the weight matrices for the controls in the cost functionals need not to be positive definite. The problem of the leader is first described as a stochastic control problem of an FBSDE. Moreover, it is shown that the open-loop solution admits a state feedback representation if a new stochastic Riccati equation is solvable. \O ksendal et al. \cite{OSU13} proved a maximum principle for the stochastic Stackelberg differential game when the noise is described as an Ito-L\'{e}vy process, and found applications to a continuous time manufacturer-newsvendor model. Bensoussan et al. \cite{BCS15} proposed several solution concepts in terms of the players' information sets, for the stochastic Stackelberg differential game with the control-independent diffusion term, and derived the maximum principle under the adapted closed-loop memoryless information structure. Xu and Zhang \cite{XZ16} studied both discrete- and continuous-time stochastic Stackelberg differential games with time delay. By introducing a new costate, a necessary and sufficient condition for the existence and uniqueness of the Stackelberg equilibrium was presented and was designed in terms of three decoupled and symmetric Riccati equations. Shi et el. \cite{SWX16} solved a stochastic leader-follower differential game with asymmetric information, where the information available to the follower is based on some sub-$\sigma$-algebra of that available to the leader. Stochastic maximum principles and verification theorems with partial information were obtained. An LQ stochastic leader-follower differential game with noisy observation was solved via measure transformation, stochastic filtering, where not all the diffusion coefficients contain the state and control variables. In Shi et al. \cite{SWX17}, an LQ stochastic Stackelberg differential game with asymmetric information was researched, where the control variables enter both diffusion coefficients of the state equation, via some forward-backward stochastic differential filtering equations (FBSDFEs). In Li and Yu \cite{LY18}, a kind of coupled FBSDE with a multilevel self-similar domination-monotonicity structure was introduced to characterize the unique equilibrium of an LQ generalized Stackelberg game with multilevel hierarchy in a closed form. Moon and Ba\c{s}ar \cite{MB18} considered a LQ mean field Stackelberg differential games with one leader and arbitrarily large number of followers. Xu et al. \cite{XSZ18} studied a leader-follower stochastic differential game  with time delay appearing in the leader¡¯s control. The open-loop solution is explicitly given in the form of the conditional expectation with respect to several symmetric Riccati equations, with the nonhomogeneous relationship between the forward variables and the backward ones obtained in the optimization problems of both the follower and the leader. Lin et al. \cite{LJZ19} considered the open-loop LQ Stackelberg game of the mean-field stochastic systems in finite horizon. A sufficient condition for the existence and uniqueness of the Stackelberg strategy in terms of the solvability of some Riccati equations is presented. Furthermore, it was shown that the open-loop Stackelberg equilibrium admits a feedback representation involving the new state and its mean. Some recent progress about Stackelberg games can be seen in a review paper by Li and Sethi \cite{LS17} and the references therein.

A BSDE is an It\^{o}'s stochastic differential equation (SDE) in which a prescribed terminal condition $y(T)=\xi$ is given. The BSDE admits an adapted solution pair $(y(\cdot),z(\cdot))$ under some conditions, where the additional term $z(\cdot)$ is required for the solutions to the equation being adapted processes. This is essentially different from forward SDEs. The linear BSDEs are initially introduced by Bismut \cite{Bis78}. The theory of general nonlinear BSDEs is established by Pardox and Peng \cite{PP90}, and Duffie and Epstein \cite{DE92}, with applications to many fields such as optimal control, partial differential equation, differential game, mathematical finance, option pricing, etc. Two recent monographs about BSDEs can be seen in Pardoux and Rascanu \cite{PR14} and Zhang \cite{Zhang17}.

Since BSDEs are well-defined dynamic systems, it is very natural and appealing to study the control and game problems involving BSDEs. The optimal control problem of BSDEs was first studied by Peng \cite{P92,P93} and El Karoui et al. \cite{EPQ97}, when solving the recursive utility maximization problems. Dokachev and Zhou \cite{DZ99} studied a nonlinear stochastic control problem of BSDEs. A necessary condition of optimality in the form of a global maximum principle as well as a sufficient condition of optimality are presented. The general result is also applied to backward LQ (BLQ) control problem and an optimal control is obtained explicitly as a feedback of the solution to a FBSDE. In Chen and Zhou \cite{CZ00}, when a general optimization model of stochastic LQ regulators with indefinite control cost weighting matrices is studied, a subproblem with a backward dynamics is proposed and studied. Lim and Zhou \cite{LZ01} studied the general BLQ control problem by the completion-of-squares technique, and the relationship between the BSDE and the forward LQ stochastic control problem. The optimal control is represented as a feedback of the entire history of the state. Huang et al. \cite{HWX09} studied a BLQ control problem with partial information, the explicit solutions for the optimal control are obtained in terms of some FBSDFEs. Zhang \cite{Zhang11} studied a BLQ control problem with random jumps. Shi \cite{Shi11} researched an optimal control problem of BSDEs with time delayed generators. Lou and Li \cite{LL13} and Li et al. \cite{LSX19} studied the BLQ control problem for mean-field case.

The differential game problem of BSDEs was initially studied by Yu and Ji \cite{YJ08}, and the explicit form of a Nash equilibrium point was given for the LQ non-zero sum case. Wang and Yu \cite{WY10} established necessary and sufficient conditions of the Nash equilibrium point for the non-zero sum stochastic differential game of BSDEs. Then Wang and Yu \cite{WY12} considered the partial information case, and a verification theorem which is a sufficient condition for the Nash equilibrium point was established. Shi and Wang \cite{SW16} studied a non-zero sum differential game of BSDEs with time-delayed generator. An Arrow's sufficient condition for the Nash equilibrium point is proved. Huang et al. \cite{HWW16} studied a backward mean-field LQ Gaussian games of weakly coupled stochastic large population system. Wang et al. \cite{WXX18} focused on a kind of LQ non-zero sum differential game of BSDEs with asymmetric information. Du et al. \cite{DHW18} studied the mean-field game of N weakly-coupled linear BSDE system. Very recently, an LQ Stackelberg game for mean-field backward stochastic systems was studied by Du and Wu \cite{DW19}.

Inspired by the above literatures, in this paper we consider a Stackelberg game of BSDEs, where the coefficients of the backward system and the cost functionals are deterministic, and the control domain is convex. The novelty of the formulation and the contribution in this paper is the following. (1) A new kind of general Stackelberg game of BSDEs is introduced and studied by the maximum principle approach, where a terminal condition $\xi$ is given in advance. (2) For the LQ case, first, two Riccati equations, a linear BSDE and a linear SDE are introduced to get the state feedback form for the optimal control of the follower. See Theorem 4.2 and the chart below it. Then, two high-dimensional Riccati equations, a linear BSDE and a linear SDE are introduced to represent the optimal control of the leader as the state feedback form. See Theorem 4.4 and the chart below it. (3) The solvability of the four Riccati equations are discussed. (4) An optimal consumption rate problem of two players in the financial market is studied, the Stackelberg equilibrium point is represented and the optimal initial wealth reserve is obtained explicitly.

The rest of this paper is organized as follows. In Section 2, the general Stackelberg game of BSDEs is formulated. And this general problem is studied in Section 3. The follower's problem of the BSDE is considered first in Subsection 3.1, while the leader's problem of the FBSDE is studied in Subsection 3.2. By the maximum principle approach, necessary and sufficient conditions for the optimal controls of the follower and the leader's are given. In Section 4, the LQ Stackelberg game of BSDEs is investigated. Via two Riccati equations, the optimal control of the follower is given in the state feedback form. And the optimal control of the leader is represented as the state feedback form by the solutions to two high-dimensional Riccati equations. The solvability of these Riccati equations is also discussed, and the optimal solution to the LQ Stackelberg game of BSDEs is derived. In Section 5, the results in the previous sections are applied to an optimal consumption rate problem of two players in the financial market. Some concluding remarks are given in Section 6.

\section{Problem Formulation}

In this paper, we use $\mathbb{R}^n$ to denote the Euclidean space of $n$-dimensional vectors, $\mathbb{R}^{n\times d}$ to denote the space of $n\times d$ matrices, and $\mathcal{S}^n$ to denote the space of $n\times n$ symmetric matrices. $\langle\cdot,\cdot\rangle$ and $|\cdot|$ are used to denote the scalar product and norm in the Euclidean space, respectively. A $\top$ appearing in the superscript of a matrix, denotes its transpose. $f_x,f_{xx}$ denote the first- and second-order partial derivatives with respect to $x$ for a differentiable function $f$, respectively.

Let $T>0$ be fixed. Consider a complete probability space $(\Omega,\mathcal{F},\mathbb{P})$ and a standard $d$-dimensional Brownian motion $W(t)$ with $W(0)=0$, which generates the filtration $\mathcal{F}_{t}=\sigma\{W(r): 0\leq r\leq t\}$ augmented by all the $\mathbb{P}$-null sets in $\mathcal{F}$. Throughout this paper, $L_{\mathcal{F}_T}^2(\Omega,\mathbb{R}^n)$ will denote the set of $\mathbb{R}^n$-valued, $\mathcal{F}_T$-measurable random vectors, $L^2_\mathcal{F}(0,T;\mathbb{R}^n)$ will denote the set of $\mathbb{R}^n$-valued, $\mathcal{F}_t$-adapted, square integrable processes on $[0,T]$, $L^2_\mathcal{F}(0,T;\mathbb{R}^{n\times d})$ will denote the set of $n\times d$-matrix-valued, $\mathcal{F}_t$-adapted, square integrable processes on $[0,T]$, and $L^\infty(0,T;\mathbb{R}^{n\times d})$ will denote the set of $n\times d$-matrix-valued, bounded functions on $[0,T]$.

Let us consider the following controlled BSDE:
\begin{equation}\label{bsde1}
\left\{
\begin{aligned}
-dy(t)&=f(t,y(t),z(t),u_1(t),u_2(t))dt-z(t)dW(t),\ t\in[0,T],\\
  y(T)&=\xi.
\end{aligned}
\right.
\end{equation}
Here $\xi\in L_{\mathcal{F}_T}^2(\Omega,\mathbb{R}^n)$ is given and $(y(\cdot),z(\cdot))\in L^2_\mathcal{F}(0,T;\mathbb{R}^n)\times L^2_\mathcal{F}(0,T;\mathbb{R}^{n\times d})$ is the state process pair. $u_1(\cdot)\in U$ is the control process of the follower, and $u_2(\cdot)\in U$ is the control process of the leader, where $U$ is a nonempty convex subset of $\mathbb{R}^k$.

We define the cost functional of the follower and the leader as
\begin{equation}\label{cf}
J_i(u_1(\cdot),u_2(\cdot);\xi)=\mathbb{E}\bigg\{\int_0^TL_i(t,y(t),z(t),u_1(t),u_2(t))dt+h_i(y(0))\bigg\},\ i=1,2,
\end{equation}
and the admissible control sets are given by
\begin{equation}
\mathcal{U}_i[0,T]=\big\{u_i(\cdot)\in L^2_\mathcal{F}(0,T;\mathbb{R}^k)\big|u_i(t)\in U,\ a.e,\ a.s.\big\},\ i=1,2,
\end{equation}
respectively.

In \eqref{bsde1} and \eqref{cf}, $f(t,y,z,u_1,u_2):[0,T]\times\mathbb{R}^n\times\mathbb{R}^{n\times d}\times\mathbb{R}^k\times\mathbb{R}^k\rightarrow\mathbb{R}^n$, $L_i(t,y,z,u_1,u_2):[0,T]\times\mathbb{R}^n\times\mathbb{R}^{n\times d}\times\mathbb{R}^k\times\mathbb{R}^k\rightarrow\mathbb{R}$, $h_i(y):\mathbb{R}^n\rightarrow\mathbb{R}$ are given functions.
Now we introduce the following assumption that will be in force throughout this paper.

{\bf(A1)} The functions $f(t,y,z,u_1,u_2)$, $L_i(t,y,z,u_1,u_2)$, $h_i(y)$ are continuous.

{\bf(A2)} The functions $f(t,y,z,u_1,u_2)$, $L_i(t,y,z,u_1,u_2)$, $h_i(y)$ are twice continuously differentiable with respect to $y,z$. Moreover, the following inequalities hold.
\begin{equation*}
\begin{split}
&|L_i(t,y,z,u_1,u_2)|+|h_i(y)|\leq C_{0}(|y|^2+|z|^2+1),\\
&|f(t,y,z,u_1,u_2)|+|L_{iy}(t,y,z,u_1,u_2)|+|L_{iz}(t,y,z,u_1,u_2)|+|h_y(y)|\leq C_1(|y|+|z|+1),\\
&|f_y(t,y,z,u_1,u_2)|+|f_{z_j}(t,y,z,u_1,u_2)|+|f_{yy}(t,y,z,u_1,u_2)|+|L_{iyy}(t,y,z,u_1,u_2)|\\
&+|L_{iz_jz_j}(t,y,z,u_1,u_2)|+|h_{iyy}(y)|\leq C_2,
\end{split}
\end{equation*}
where $C_k>0$ are constants, $k=0,1,2$, and $z_j,j=1,\cdots,d$ are the columns of the matrix $z$.

The problem studied in this paper is proposed in the following definition.

\begin{definition}
The pair $(\bar{u}_1(\cdot),\bar{u}_2(\cdot))\in\mathcal{U}_1[0,T]\times\mathcal{U}_2[0,T]$ is called an optimal solution to the Stackelberg game of BSDEs, if it satisfies the following condition:\\
(i)\ For given $\xi\in L_{\mathcal{F}_T}^2(\Omega,\mathbb{R}^n)$ and any $u_2(\cdot)\in\mathcal{U}_2[0,T]$, there exists a map $\Gamma:\mathcal{U}_2[0,T]\times L_{\mathcal{F}_T}^2(\Omega,\mathbb{R}^n)\rightarrow\mathcal{U}_1[0,T]$ such that
\begin{equation}
J_1(\Gamma(u_2(\cdot),\xi),u_2(\cdot);\xi)=\min_{u_1(\cdot)\in\mathcal{U}_1[0,T]}J_1(u_1(\cdot),u_2(\cdot);\xi).
\end{equation}
(ii)\ There exists a unique $\bar{u}_2(\cdot)\in\mathcal{U}_2[0,T]$ such that
\begin{equation}
J_2(\Gamma(\bar{u}_2(\cdot),\xi),\bar{u}_2(\cdot);\xi)=\min_{u_2(\cdot)\in\mathcal{U}_2[0,T]}J_2(\Gamma(\bar{u}_2(\cdot),\xi),u_2(\cdot);\xi).
\end{equation}
(iii)\ The optimal strategy of the follower is $\bar{u}_1(\cdot)=\Gamma(\bar{u}_2(\cdot),\xi)$.
\end{definition}

\section{The general problem}

\subsection{Optimization for the follower}

Let $\xi\in L_{\mathcal{F}_T}^2(\Omega,\mathbb{R}^n)$ be given. Giving the leader's strategy $u_2(\cdot)\in\mathcal{U}_2[0,T]$, assume that the process $\bar{u}_1(\cdot)$ is an optimal control of the follower, and $(\bar{y}(\cdot),\bar{z}(\cdot))$ be the corresponding trajectory. Let $x(\cdot)\in L^2_\mathcal{F}(0,T;\mathbb{R}^n)$ satisfy the following adjoint equation
\begin{equation}\label{sde}
\left\{
\begin{aligned}
dx(t)=&\Big\{f_y(t,\bar{y}(t),\bar{z}(t),\bar{u}_1(t),u_2(t))^\top x(t)+L_{1y}(t,\bar{y}(t),\bar{z}(t),\bar{u}_1(t),u_2(t))^\top\Big\}dt\\
      &+\sum_{i=1}^d\Big\{f_{z_i}(t,\bar{y}(t),\bar{z}(t),\bar{u}_1(t),u_2(t))^\top x(t)\\
      &\quad+L_{1z_i}(t,\bar{y}(t),\bar{z}(t),\bar{u}_1(t),u_2(t))^{\top}\Big\}dW_i(t),\ t\in[0,T],\\
 x(0)=&\ h_{1y}(\bar{y}(0))^{\top},
\end{aligned}
\right.
\end{equation}
and the Hamiltionian function $H_1:[0,T]\times\mathbb{R}^n\times\mathbb{R}^{n\times d}\times\mathbb{R}^k\times\mathbb{R}^k\times\mathbb{R}^n\rightarrow\mathbb{R}$ is defined as
\begin{equation}\label{H1}
H_1(t,y,z,u_1,u_2,x)=-\langle x,f(t,y,z,u_1,u_2)\rangle-L_1(t,y,z,u_1,u_2).
\end{equation}

The following two results are direct from Theorems 3.1 and 5.1 of \cite{DZ99}.
\begin{theorem}\label{Maximum principle for follower}
(Necessary Conditions for Optimality) Under the assumptions {\bf (A1)} and {\bf (A2)}, let $\xi\in L_{\mathcal{F}_T}^2(\Omega,\mathbb{R}^n)$. Giving the leader's strategy $u_2(\cdot)\in\mathcal{U}_2[0,T]$, let $\bar{u}_1(\cdot)$ be the optimal control of the follower and $(\bar{y}(\cdot),\bar{z}(\cdot))$ be the corresponding trajectory. Then for any $u_1(\cdot)\in U$, we have
\begin{equation}\label{Maximum condition}
\langle H_{1u_1}(t,\bar{y}(t),\bar{z}(t),\bar{u}_1(t),u_2(t),x(t)),u_1(t)-\bar{u}_1(t)\rangle\geq0,\ a.e.\ t\in[0,T],\ a.s.,
\end{equation}
where $x(\cdot)$ is the solution to the adjoint equation \eqref{sde}.
\end{theorem}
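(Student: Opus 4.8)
The plan is to derive \eqref{Maximum condition} by a first-order (G\^ateaux) variation of the follower's control, combined with the adjoint (duality) process $x(\cdot)$ of \eqref{sde}. Since $\bar u_1(\cdot)$ minimizes $u_1(\cdot)\mapsto J_1(u_1(\cdot),u_2(\cdot);\xi)$ over the convex set $\mathcal U_1[0,T]$, for an arbitrary $u_1(\cdot)\in\mathcal U_1[0,T]$ the perturbation $u_1^\epsilon(\cdot):=\bar u_1(\cdot)+\epsilon\big(u_1(\cdot)-\bar u_1(\cdot)\big)$ is admissible for every $\epsilon\in[0,1]$, and $J_1(u_1^\epsilon,u_2;\xi)\geq J_1(\bar u_1,u_2;\xi)$ forces the one-sided derivative at $\epsilon=0^+$ to be nonnegative. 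Writing $(y^\epsilon(\cdot),z^\epsilon(\cdot))$ for the solution of \eqref{bsde1} under $(u_1^\epsilon,u_2)$, the first step is to show, using the growth and boundedness bounds of {\bf(A2)} and standard $L^2$ stability estimates for BSDEs, that $\epsilon\mapsto(y^\epsilon,z^\epsilon)$ is differentiable in $L^2_\mathcal{F}(0,T;\mathbb{R}^n)\times L^2_\mathcal{F}(0,T;\mathbb{R}^{n\times d})$, with derivative $(\tilde y(\cdot),\tilde z(\cdot))$ at $\epsilon=0$ solving the linear variational BSDE whose generator is $f_y\tilde y+\sum_{i=1}^d f_{z_i}\tilde z_i+f_{u_1}(u_1-\bar u_1)$ (all coefficients evaluated along the optimal triple) and whose terminal value is $\tilde y(T)=0$, the latter because $y^\epsilon(T)=\xi$ is fixed.

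Second, I would differentiate the cost. The same estimates legitimize differentiation under the expectation, giving
\[
\frac{d}{d\epsilon}J_1(u_1^\epsilon,u_2;\xi)\Big|_{\epsilon=0^+}=\mathbb{E}\bigg\{\int_0^T\Big(\langle L_{1y}^\top,\tilde y\rangle+\sum_{i=1}^d\langle L_{1z_i}^\top,\tilde z_i\rangle+\langle L_{1u_1}^\top,u_1-\bar u_1\rangle\Big)dt+\langle h_{1y}(\bar y(0))^\top,\tilde y(0)\rangle\bigg\},
\]
where every partial derivative is evaluated along $(t,\bar y,\bar z,\bar u_1,u_2)$.

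Third---the crux---I would eliminate the implicit dependence on $(\tilde y,\tilde z)$ through the adjoint state. Applying It\^o's product rule to $t\mapsto\langle x(t),\tilde y(t)\rangle$, the drift term generated by $f_y^\top x$ in \eqref{sde} cancels the $f_y\tilde y$ contribution of the variational generator, and the cross-variation term generated by $f_{z_i}^\top x$ cancels the $f_{z_i}\tilde z_i$ contributions; taking expectations and using $\tilde y(T)=0$ together with the initial condition $x(0)=h_{1y}(\bar y(0))^\top$ then yields a duality identity expressing $\mathbb{E}\int_0^T\big(\langle L_{1y}^\top,\tilde y\rangle+\sum_i\langle L_{1z_i}^\top,\tilde z_i\rangle\big)dt+\langle h_{1y}(\bar y(0))^\top,\tilde y(0)\rangle$ purely through the control perturbation and $x(\cdot)$. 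Substituting this identity into the cost derivative makes all $\tilde y$- and $\tilde z$-terms collapse and reassembles the surviving $u_1$-directional terms into the Hamiltonian \eqref{H1}, giving
\[
\frac{d}{d\epsilon}J_1(u_1^\epsilon,u_2;\xi)\Big|_{\epsilon=0^+}=\mathbb{E}\int_0^T\big\langle H_{1u_1}(t,\bar y(t),\bar z(t),\bar u_1(t),u_2(t),x(t)),\,u_1(t)-\bar u_1(t)\big\rangle\,dt,
\]
where the orientation of the pairing is precisely the one for which \eqref{sde} is the canonical co-state system associated with \eqref{H1}. Since the left-hand side is nonnegative for every admissible $u_1(\cdot)$, the integral on the right is nonnegative, and a standard localization over small time--space sets (choosing admissible $u_1(\cdot)$ equal to $\bar u_1(\cdot)$ off a set of small measure and equal to an arbitrary $v\in U$ on it) upgrades the integrated inequality to the pointwise condition \eqref{Maximum condition} for a.e. $t\in[0,T]$, $\mathbb{P}$-a.s.

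I expect the main obstacles to be twofold. The first is the rigorous variational calculus: justifying $L^2$-differentiability of $(y^\epsilon,z^\epsilon)$ in $\epsilon$ and the interchange of differentiation and expectation, which rests on uniform-in-$\epsilon$ BSDE estimates and the growth conditions in {\bf(A2)}; the $z$-variable, entering through $\sum_i f_{z_i}\tilde z_i$ in the martingale integrand, requires care so that the cross-variation terms are handled correctly. The second is the sign-faithful bookkeeping in the product-rule computation, ensuring that the $f_y$- and $f_{z_i}$-generated terms cancel exactly and that the residual control term enters with the orientation asserted in \eqref{Maximum condition}; this is where the consistency between the definition \eqref{H1} and the adjoint dynamics \eqref{sde} must be tracked precisely. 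Once these are settled, the passage from the integrated inequality to the pointwise statement is routine.
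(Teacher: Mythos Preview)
Your approach---convex perturbation, linear variational BSDE, duality via It\^o's formula applied to $\langle x(\cdot),\tilde y(\cdot)\rangle$, then localization---is exactly the standard route and matches what the paper invokes: the paper does not give its own argument here but simply cites Theorems~3.1 and~5.1 of Dokuchaev and Zhou \cite{DZ99}, whose proof follows precisely this scheme. So methodologically there is nothing to add.

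One point of caution you already anticipated: the sign bookkeeping does \emph{not} come out as you assert. With the paper's convention $H_1=-\langle x,f\rangle-L_1$, the It\^o computation you outline yields
\[
\frac{d}{d\epsilon}J_1(u_1^\epsilon,u_2;\xi)\Big|_{\epsilon=0^+}
=\mathbb{E}\int_0^T\big\langle f_{u_1}^\top x+L_{1u_1}^\top,\,u_1-\bar u_1\big\rangle\,dt
=-\,\mathbb{E}\int_0^T\big\langle H_{1u_1},\,u_1-\bar u_1\big\rangle\,dt\ \ge\ 0,
\]
so the pointwise inequality one actually obtains is $\langle H_{1u_1},u_1-\bar u_1\rangle\le 0$, i.e.\ the maximum principle in the form ``$\bar u_1$ maximizes $H_1$'', which is the orientation in \cite{DZ99}. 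The direction stated in \eqref{Maximum condition} therefore appears to carry a sign slip relative to the definition \eqref{H1}; this is harmless for everything downstream in the paper (in the LQ case only the stationarity condition $B_1^\top x+R_1\bar u_1=0$ is used, which is insensitive to the sign), but you should record the correct orientation when you write up the argument rather than force it to match the displayed inequality.
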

\begin{theorem}
(Sufficient Conditions for Optimality) Under the assumptions {\bf (A1)} and {\bf (A2)}, let $\xi\in L_{\mathcal{F}_T}^2(\Omega,\mathbb{R}^n)$. Giving the leader's strategy $u_2(\cdot)\in\mathcal{U}_2[0,T]$, assume the function $h_1(\cdot)$ is convex, and the function $H_1(t,\cdot,\cdot,\cdot,u_2,x)$ is concave and Lipschitz continuous. Then $\bar{u}_1(\cdot)$ is an optimal control of the follower's problem if it satisfies \eqref{Maximum condition}, where $x(\cdot)$ is the solution to \eqref{sde}.
\end{theorem}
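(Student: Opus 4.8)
The plan is to carry out a direct verification argument comparing $J_1(\bar u_1(\cdot),u_2(\cdot);\xi)$ with $J_1(u_1(\cdot),u_2(\cdot);\xi)$ for an arbitrary admissible $u_1(\cdot)\in\mathcal{U}_1[0,T]$. Let $(y(\cdot),z(\cdot))$ denote the state trajectory of \eqref{bsde1} associated with $(u_1(\cdot),u_2(\cdot))$ and $(\bar y(\cdot),\bar z(\cdot))$ the one associated with $(\bar u_1(\cdot),u_2(\cdot))$; note both satisfy the same terminal condition $y(T)=\bar y(T)=\xi$. First I would write the difference of costs as
\begin{equation*}
J_1(u_1(\cdot),u_2(\cdot);\xi)-J_1(\bar u_1(\cdot),u_2(\cdot);\xi)
=\mathbb{E}\bigg\{\int_0^T\big[L_1(t,y,z,u_1,u_2)-L_1(t,\bar y,\bar z,\bar u_1,u_2)\big]dt+h_1(y(0))-h_1(\bar y(0))\bigg\}.
\end{equation*}
Using convexity of $h_1$, the term $h_1(y(0))-h_1(\bar y(0))$ is bounded below by $\langle h_{1y}(\bar y(0)),y(0)-\bar y(0)\rangle=\langle x(0),y(0)-\bar y(0)\rangle$, which is exactly the initial value of the adjoint process from \eqref{sde}. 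This is where the choice $x(0)=h_{1y}(\bar y(0))^\top$ enters.

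The core step is to compute $\mathbb{E}\langle x(0),y(0)-\bar y(0)\rangle$ by applying Itô's formula to $\langle x(t),y(t)-\bar y(t)\rangle$ on $[0,T]$. Since $x(\cdot)$ solves the forward SDE \eqref{sde} and $y(\cdot)-\bar y(\cdot)$ solves a BSDE with zero terminal value (because $y(T)=\bar y(T)=\xi$) and generator $f(t,y,z,u_1,u_2)-f(t,\bar y,\bar z,\bar u_1,u_2)$, the Itô expansion gives
\begin{equation*}
-\mathbb{E}\langle x(0),y(0)-\bar y(0)\rangle
=\mathbb{E}\int_0^T\Big\{\big\langle x(t),-(f-\bar f)\big\rangle+\big\langle f_y^\top x+L_{1y}^\top,y-\bar y\big\rangle+\sum_{i=1}^d\big\langle f_{z_i}^\top x+L_{1z_i}^\top,z_i-\bar z_i\big\rangle\Big\}dt,
\end{equation*}
where bars on $f,f_y,f_{z_i},L_{1y},L_{1z_i}$ indicate evaluation along $(\bar y,\bar z,\bar u_1,u_2)$. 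Recalling the definition \eqref{H1} of $H_1$, the integrand rearranges into $-\langle x, f-\bar f\rangle - \langle \bar H_{1y},y-\bar y\rangle - \sum_i\langle \bar H_{1z_i},z_i-\bar z_i\rangle - (L_{1y}$ and $L_{1z_i}$ bookkeeping$)$; after combining with the $L_1$ difference in the cost, I would express everything in terms of $H_1$ so that
\begin{equation*}
J_1(u_1(\cdot),u_2(\cdot);\xi)-J_1(\bar u_1(\cdot),u_2(\cdot);\xi)
\geq \mathbb{E}\int_0^T\Big\{\bar H_1 - H_1 + \langle \bar H_{1y},y-\bar y\rangle+\sum_{i=1}^d\langle\bar H_{1z_i},z_i-\bar z_i\rangle\Big\}dt,
\end{equation*}
where $\bar H_1=H_1(t,\bar y,\bar z,\bar u_1,u_2,x)$ and $H_1=H_1(t,y,z,u_1,u_2,x)$.

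Now concavity of $H_1(t,\cdot,\cdot,\cdot,u_2,x)$ in $(y,z,u_1)$ gives the supporting-hyperplane inequality
\begin{equation*}
H_1 - \bar H_1 \leq \langle \bar H_{1y},y-\bar y\rangle+\sum_{i=1}^d\langle\bar H_{1z_i},z_i-\bar z_i\rangle+\langle \bar H_{1u_1},u_1-\bar u_1\rangle,
\end{equation*}
so the right-hand side above is bounded below by $-\mathbb{E}\int_0^T\langle \bar H_{1u_1}(t),u_1(t)-\bar u_1(t)\rangle dt$, which is $\geq 0$ by the assumed maximum condition \eqref{Maximum condition}. Hence $J_1(u_1(\cdot),u_2(\cdot);\xi)\geq J_1(\bar u_1(\cdot),u_2(\cdot);\xi)$, proving optimality. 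The Lipschitz continuity of $H_1$ is used to guarantee that the subgradient/supporting-hyperplane inequality holds with the classical gradients $\bar H_{1y},\bar H_{1z_i},\bar H_{1u_1}$ and that all expectations and stochastic integrals above are well defined (the martingale parts have zero expectation). The main technical obstacle I anticipate is the careful Itô's-formula bookkeeping in the second step — matching signs across the BSDE generator, the forward adjoint drift and diffusion, and the definition of $H_1$ — together with justifying integrability so that the local-martingale terms genuinely vanish in expectation; once that identity is clean, the convexity/concavity inequalities close the argument immediately.
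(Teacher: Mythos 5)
Your overall strategy is sound and is in fact more informative than what the paper offers: the paper gives no proof of this theorem at all, merely citing Theorems 3.1 and 5.1 of Dokuchaev--Zhou \cite{DZ99}, whereas you supply the standard self-contained verification argument (duality via It\^o's formula applied to $\langle x(t),y(t)-\bar y(t)\rangle$, convexity of $h_1$ at $y(0)$ to absorb the initial cost, concavity of $H_1$ to absorb the running cost). Your It\^o bookkeeping is correct: the terms $\langle f_y^\top x,y-\bar y\rangle$ and $\langle x,f_y(y-\bar y)\rangle$ would cancel only in the variational (linearized) computation, but in the verification form you keep them and correctly arrive at
\begin{equation*}
J_1(u_1(\cdot),u_2(\cdot);\xi)-J_1(\bar u_1(\cdot),u_2(\cdot);\xi)\geq \mathbb{E}\int_0^T\Big\{\bar H_1-H_1+\langle\bar H_{1y},y-\bar y\rangle+\sum_{i=1}^d\langle\bar H_{1z_i},z_i-\bar z_i\rangle\Big\}dt
\geq -\mathbb{E}\int_0^T\langle\bar H_{1u_1},u_1-\bar u_1\rangle dt.
\end{equation*}

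The gap is in the very last sentence of your argument. You assert that $-\mathbb{E}\int_0^T\langle\bar H_{1u_1},u_1-\bar u_1\rangle dt\geq 0$ ``by the assumed maximum condition \eqref{Maximum condition}.'' But \eqref{Maximum condition} as printed reads $\langle H_{1u_1},u_1-\bar u_1\rangle\geq 0$, which gives exactly the opposite bound, $-\mathbb{E}\int_0^T\langle\bar H_{1u_1},u_1-\bar u_1\rangle dt\leq 0$, and the argument then proves nothing. What your chain of inequalities actually requires is $\langle\bar H_{1u_1},u_1-\bar u_1\rangle\leq 0$, i.e.\ the first-order condition for $\bar u_1$ to \emph{maximize} the concave map $v\mapsto H_1(t,\bar y,\bar z,v,u_2,x)$ over the convex set $U$. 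A first-variation computation of the necessary condition (perturb $u_1^\varepsilon=\bar u_1+\varepsilon(u_1-\bar u_1)$, linearize \eqref{bsde1}, and pair with \eqref{sde}) gives $\tfrac{d}{d\varepsilon}J_1\big|_{\varepsilon=0}=-\mathbb{E}\int_0^T\langle\bar H_{1u_1},u_1-\bar u_1\rangle dt\geq 0$, confirming that $\leq 0$ is the correct sign for the minimization problem with $H_1$ defined as in \eqref{H1}; so the discrepancy is almost certainly a sign typo in \eqref{Maximum condition}, consistent with the convention of \cite{DZ99} where the optimal control maximizes the Hamiltonian. You should either flag and correct that sign or restate the hypothesis you are actually using; as written, the final deduction contradicts the condition it cites. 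The remaining technical point you defer --- that the stochastic integrals in the It\^o expansion are true martingales --- is handled by a routine localization argument using the linear-growth and boundedness bounds in {\bf(A2)} together with square integrability of $(y,z,\bar y,\bar z,x)$, and is not an obstacle.
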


\subsection{Optimization for the leader}

Since the follower's optimal response $\bar{u}_1(\cdot)$ can be determined by the leader, the state equation of leader has the following form as an FBSDE:
\begin{equation}\label{fbsde}
\left\{
\begin{aligned}
       dx(t)&=\Big\{f_y(t,\bar{y}(t),\bar{z}(t),\bar{u}_1(t),u_2(t))^\top x(t)+L_{1y}(t,\bar{y}(t),\bar{z}(t),\bar{u}_1(t),u_2(t))^\top\Big\}dt\\
            &\quad+\sum_{i=1}^d\Big\{f_{z_i}(t,\bar{y}(t),\bar{z}(t),\bar{u}_1(t),u_2(t))^\top x(t)+L_{1z_i}(t,\bar{y}(t),\bar{z}(t),\bar{u}_1(t),u_2(t))^{\top}\Big\}dW_i(t),\\
-d\bar{y}(t)&=f(t,\bar{y}(t),\bar{z}(t),\bar{u}_1(t),u_2(t))dt-\bar{z}(t)dW(t),\ t\in[0,T],\\
        x(0)&=h_{1y}(\bar{y}(0))^\top,\ \bar{y}(T)=\xi.
\end{aligned}
\right.
\end{equation}
Then we introduce the Hamiltonian $H_2:[0,T]\times\mathbb{R}^n\times\mathbb{R}^{n\times d}\times\mathbb{R}^k\times\mathbb{R}^k\times\mathbb{R}^n\times\mathbb{R}^n\times\mathbb{R}^{n\times d}\times\mathbb{R}^n\rightarrow\mathbb{R}$ as
\begin{equation}\label{H2}
\begin{aligned}
H_2(t,y,z,u_1,u_2,x,p,k,q)=&\ \langle p,b(t,y,z,u_1,u_2,x)\rangle+\langle k,\sigma(t,y,z,u_1,u_2,x)\\
&+\langle q,f(t,y,z,u_1,u_2)\rangle+L_2(t,y,z,u_1,u_2,x),
\end{aligned}
\end{equation}
where we denote $b(t,y,z,u_1,u_2,x)\equiv f_y(t,y,z,u_1,u_2)^\top x+L_{1y}(t,y,z,u_1,u_2)^\top$, $\sigma(t,y,z,u_1,u_2,x)\equiv\sum_{i=1}^d\big[f_{z_i}(t,y,z,u_1,u_2)^\top x+L_{1z_i}(t,y,z,u_1,u_2)^\top\big]$.

Let $\bar{u}_2(\cdot)$ be an optimal control of the leader and $(\bar{x}(\cdot),\bar{y}(\cdot),\bar{z}(\cdot))$ be the corresponding trajectory. Let $(p(\cdot),k(\cdot),q(\cdot))\in L^2_\mathcal{F}(0,T;\mathbb{R}^n)\times L^2_\mathcal{F}(0,T;\mathbb{R}^{n\times d})\times L^2_\mathcal{F}(0,T;\mathbb{R}^n)$ satisfy the following adjoint equation
\begin{equation}\label{FBSDE}
\left\{
\begin{aligned}
-dp(t)&=H_{2x}(t,\bar{y}(t),\bar{z}(t),\bar{u}_1(t),\bar{u}_2(t),\bar{x}(t),p(t),k(t),q(t))dt-k(t)dW(t),\\
 dq(t)&=H_{2y}(t,\bar{y}(t),\bar{z}(t),\bar{u}_1(t),\bar{u}_2(t),\bar{x}(t),p(t),k(t),q(t))dt\\
      &\quad+H_{2z}(t,\bar{y}(t),\bar{z}(t),\bar{u}_1(t),\bar{u}_2(t),\bar{x}(t),p(t),k(t),q(t))dW(t),\ t\in[0,T],\\
  p(T)&=0,\ q(0)=h_{2x}(\bar{y}(0)).
\end{aligned}
\right.
\end{equation}

The following result belongs to \cite{P93}.
\begin{theorem}
(Necessary Conditions for Optimality) Assuming (A1) and (A2) hold, let $\bar{u}_2(\cdot)$ be the optimal control, and $(\bar{x}(\cdot),\bar{y}(\cdot),\bar{z}(\cdot))$ be the corresponding trajectory with $\bar{y}(T)=\xi$. Then for any $u_2(\cdot)\in U$, we have
\begin{equation}\label{Maximum principle for leader}
\big\langle H_{2u_{2}}(t,\bar{y}(t),\bar{z}(t),\bar{u}_1(t),\bar{u}_2(t),\bar{x}(t),p(t),k(t),q(t)),u_2(t)-\bar{u}_2(t)\big\rangle\geq0,\ a.e.\ t\in[0,T],\ a.s.,
\end{equation}
where $(p(\cdot),k(\cdot),q(\cdot))$  is the solution to the adjoint equation \eqref{FBSDE}.
\end{theorem}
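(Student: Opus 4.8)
The plan is to prove \eqref{Maximum principle for leader} by the standard convex-variation method; indeed the statement is exactly the stochastic maximum principle for optimal control problems of FBSDEs of Peng \cite{P93}, specialised to the leader's problem, i.e.\ the optimal control problem with state equation \eqref{fbsde}, control $u_2(\cdot)\in\mathcal{U}_2[0,T]$ and cost $J_2$. Because the control region $U$ is convex and, by (A1)--(A2) (together with the differentiability in the control variables that is implicit in the statement), all the coefficients of \eqref{fbsde} and of the cost \eqref{cf} are continuously differentiable in $(x,y,z,u_1,u_2)$ with the stated linear-growth and boundedness estimates, only first-order perturbations are required; no spike variation, and hence no second-order adjoint equation, enters.

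First I would fix $u_2(\cdot)\in\mathcal{U}_2[0,T]$, set $v(\cdot):=u_2(\cdot)-\bar u_2(\cdot)$, and for $\rho\in(0,1]$ form the perturbed control $u_2^\rho(\cdot):=\bar u_2(\cdot)+\rho v(\cdot)$, which lies in $\mathcal{U}_2[0,T]$ by convexity of $U$; let $(x^\rho(\cdot),y^\rho(\cdot),z^\rho(\cdot))$ be the corresponding solution of \eqref{fbsde}. Next I would introduce the variational system obtained by formally differentiating \eqref{fbsde} in $\rho$ at $\rho=0$: a linear BSDE for $(y_1(\cdot),z_1(\cdot))$ with $y_1(T)=0$ (since $\xi$ is frozen) and generator the linearization of $f$ in $(y,z,u_2)$, coupled with a linear forward SDE for $x_1(\cdot)$ with $x_1(0)=h_{1yy}(\bar y(0))y_1(0)$ whose drift and diffusion are the linearizations of $b$ and $\sigma$ in $(x,y,z,u_2)$. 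This variational system is triangular, since $f$ does not depend on $x$, so it is uniquely solvable by solving the BSDE first and then the SDE. Gronwall's inequality, the Burkholder--Davis--Gundy inequality and the bounds of (A2) then yield the first-order expansion
\begin{equation*}
\mathbb{E}\Big[\sup_{0\le t\le T}|x^\rho(t)-\bar x(t)-\rho x_1(t)|^2\Big]+\mathbb{E}\int_0^T\Big(|y^\rho(t)-\bar y(t)-\rho y_1(t)|^2+|z^\rho(t)-\bar z(t)-\rho z_1(t)|^2\Big)dt=o(\rho^2),
\end{equation*}
whence, with Taylor expansions of $L_2$ and $h_2$,
\begin{equation*}
J_2(u_2^\rho(\cdot);\xi)-J_2(\bar u_2(\cdot);\xi)=\rho\,\mathbb{E}\bigg[\int_0^T\big(\langle L_{2x},x_1\rangle+\langle L_{2y},y_1\rangle+\langle L_{2z},z_1\rangle+\langle L_{2u_2},v\rangle\big)dt+\langle h_{2y}(\bar y(0)),y_1(0)\rangle\bigg]+o(\rho).
\end{equation*}
Optimality of $\bar u_2(\cdot)$ forces the bracketed first-order term to be nonnegative.

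The heart of the argument is to eliminate $(x_1,y_1,z_1)$ from this inequality by duality with the adjoint triple $(p(\cdot),k(\cdot),q(\cdot))$ of \eqref{FBSDE}. I would apply Itô's formula to $\langle p(\cdot),x_1(\cdot)\rangle-\langle q(\cdot),y_1(\cdot)\rangle$ on $[0,T]$, take expectations, and substitute the dynamics of \eqref{FBSDE}, those of the variational system, and the definition \eqref{H2} of $H_2$. Every term in which $x_1,y_1,z_1$ are paired against $p,k,q$ then cancels in conjugate pairs — this is exactly what the structure of $H_2$ and the choice of the drift and diffusion of $(p,k,q)$ in \eqref{FBSDE} are designed to produce — leaving the running terms $\langle L_{2x},x_1\rangle+\langle L_{2y},y_1\rangle+\langle L_{2z},z_1\rangle$ and the control term $\langle p,b_{u_2}v\rangle+\langle k,\sigma_{u_2}v\rangle+\langle q,f_{u_2}v\rangle=\langle H_{2u_2}-L_{2u_2},v\rangle$; the boundary terms, via $p(T)=0$, $y_1(T)=0$, $x_1(0)=h_{1yy}(\bar y(0))y_1(0)$ and the initial condition imposed on $q$ in \eqref{FBSDE}, combine with and cancel the term $\langle h_{2y}(\bar y(0)),y_1(0)\rangle$ carried by the cost expansion. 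Feeding the resulting identity back therefore reduces the bracketed first-order term to exactly $\mathbb{E}\int_0^T\langle H_{2u_2},v(t)\rangle dt$, so that
\begin{equation*}
\mathbb{E}\int_0^T\big\langle H_{2u_2}\big(t,\bar y(t),\bar z(t),\bar u_1(t),\bar u_2(t),\bar x(t),p(t),k(t),q(t)\big),u_2(t)-\bar u_2(t)\big\rangle\,dt\ge 0
\end{equation*}
for every $u_2(\cdot)\in\mathcal{U}_2[0,T]$. Finally, fixing an arbitrary $\nu\in U$ and testing with the admissible variation $u_2(t,\omega)=\bar u_2(t,\omega)+\mathbf{1}_{A}(t,\omega)\big(\nu-\bar u_2(t,\omega)\big)$, $A\subset[0,T]\times\Omega$ measurable, and then shrinking $A$, a standard Lebesgue-point argument gives \eqref{Maximum principle for leader}.

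I expect the duality step to be the main obstacle. Unlike a pure forward or a pure backward control problem, the leader's state is a genuinely coupled FBSDE: its forward initial value $x(0)=h_{1y}(\bar y(0))^\top$ is tied to the backward initial value $\bar y(0)$, while its backward terminal value $\bar y(T)=\xi$ is frozen, so the variational system carries the mixed boundary data $x_1(0)=h_{1yy}(\bar y(0))y_1(0)$ and $y_1(T)=0$. Keeping track of and cancelling the boundary cross-terms this produces — which is precisely what dictates the non-standard terminal/initial data of $(p,k,q)$ in \eqref{FBSDE} — is the delicate part of the bookkeeping, and it is this that is carried out in \cite{P93}.
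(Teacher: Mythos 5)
Your strategy is the right one and is exactly what the paper delegates to \cite{P93}: the paper gives no proof of this theorem beyond the citation, and your convex perturbation $u_2^\rho=\bar u_2+\rho(u_2-\bar u_2)$, the triangular variational system (solvable BSDE-first because $f$ does not depend on $x$), the $o(\rho)$ expansion of $J_2$, the duality computation with $(p,k,q)$, and the final localization are the standard ingredients of that proof. The running cross-terms do cancel exactly as you say, by the definition \eqref{H2} of $H_2$ and the choice of the drift and diffusion coefficients in \eqref{FBSDE}.

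The step that does not close as written is the boundary bookkeeping at $t=0$ --- precisely the point you flag as delicate and then assert away. Writing out the two duality identities: pairing $p$ with $x_1$ and using $p(T)=0$, $x_1(0)=h_{1yy}(\bar y(0))y_1(0)$ produces the boundary contribution $-\mathbb{E}\langle p(0),h_{1yy}(\bar y(0))y_1(0)\rangle$; pairing $q$ with $y_1$ and using $y_1(T)=0$ produces $-\mathbb{E}\langle q(0),y_1(0)\rangle$. Inserting both into the cost expansion, the first-order term becomes
\begin{equation*}
\mathbb{E}\big\langle h_{2y}(\bar y(0))^\top-q(0)+h_{1yy}(\bar y(0))p(0),\,y_1(0)\big\rangle+\mathbb{E}\int_0^T\big\langle H_{2u_2},\,u_2(t)-\bar u_2(t)\big\rangle dt.
\end{equation*}
With the initial condition of \eqref{FBSDE} read as $q(0)=h_{2y}(\bar y(0))^\top$, the term $\mathbb{E}\langle h_{1yy}(\bar y(0))p(0),y_1(0)\rangle$ survives, and since $y_1(0)$ varies with the perturbation it cannot simply be dropped. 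The cancellation you claim therefore requires $q(0)=h_{2y}(\bar y(0))^\top+h_{1yy}(\bar y(0))p(0)$, the extra term reflecting that the forward initial datum $x(0)=h_{1y}(\bar y(0))^\top$ moves with $y(0)$; alternatively you would have to argue that $h_{1yy}(\bar y(0))p(0)$ pairs to zero against every admissible $y_1(0)$, which is not true in general. You should either carry out the proof with the corrected adjoint initial condition or isolate the hypothesis ($h_{1yy}=0$, i.e.\ $h_1$ affine) under which \eqref{FBSDE} as stated suffices. (In the LQ case of Section 4 the issue disappears, because there the leader's forward state is $\varphi$ with the decoupled initial condition $\varphi(0)=0$.)
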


We also examine when the necessary condition of optimality \eqref{Maximum principle for leader} becomes sufficient.
\begin{theorem}
(Sufficient conditions for Optimality) Assuming (A1) and (A2) hold, let $\bar{u}_2(\cdot)$ be an admissible control, and $(\bar{x}(\cdot),\bar{y}(\cdot),\bar{z}(\cdot))$ be the corresponding trajectory with $\bar{y}(T)=\xi$. Suppose the function $h_2(\cdot)$ is convex and the function $H_2(t,\cdot,\cdot,\cdot,\bar{u}_1(t),\cdot)$ is convex, and $(q(\cdot),p(\cdot),k(\cdot))$ is the solution to \eqref{FBSDE}. Then $\bar{u}_2(\cdot)$ is an optimal control if it satisfies \eqref{Maximum principle for leader}.
\end{theorem}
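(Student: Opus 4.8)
The plan is to establish the sufficiency of \eqref{Maximum principle for leader} by a direct convexity/duality argument, mimicking the proof of the sufficient maximum principle for the follower (Theorem 3.2) but now carried out on the forward-backward system \eqref{fbsde} rather than on a pure BSDE. Fix the admissible control $\bar u_2(\cdot)$ satisfying \eqref{Maximum principle for leader}, with trajectory $(\bar x(\cdot),\bar y(\cdot),\bar z(\cdot))$ and adjoint triple $(p(\cdot),k(\cdot),q(\cdot))$ solving \eqref{FBSDE}. Take an arbitrary $u_2(\cdot)\in\mathcal U_2[0,T]$ with trajectory $(x(\cdot),y(\cdot),z(\cdot))$ (note the follower's response $\bar u_1(\cdot)$ is frozen, since the leader anticipates it). The goal is to show $J_2(\bar u_1(\cdot),u_2(\cdot);\xi)-J_2(\bar u_1(\cdot),\bar u_2(\cdot);\xi)\ge 0$. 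Writing this difference as $\mathbb E\int_0^T\big[L_2(t,y,z,\bar u_1,u_2,x)-L_2(t,\bar y,\bar z,\bar u_1,\bar u_2,\bar x)\big]dt+\mathbb E\big[h_2(y(0))-h_2(\bar y(0))\big]$, I would replace $L_2$ using the definition \eqref{H2} of $H_2$, i.e. $L_2=H_2-\langle p,b\rangle-\langle k,\sigma\rangle-\langle q,f\rangle$, turning the running-cost difference into an $H_2$-difference minus the differences of the three inner-product terms.

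The next step is to apply It\^o's formula to the three pairings $\langle p(t),x(t)-\bar x(t)\rangle$, $\langle q(t),y(t)-\bar y(t)\rangle$, and (to handle the $h_2$ term) to combine with the boundary conditions $p(T)=0$, $q(0)=h_{2x}(\bar y(0))^{\top}$, $x(0)=h_{1y}(\bar y(0))^{\top}$, $y(T)=\xi$. Expanding $d\langle p,x-\bar x\rangle$ and $d\langle q,y-\bar y\rangle$ and using the dynamics in \eqref{fbsde} and \eqref{FBSDE}, the martingale parts vanish in expectation and one is left with integrals of $H_{2x}$, $H_{2y}$, $H_{2z}$ paired against $x-\bar x$, $y-\bar y$, $z-\bar z$, plus the terms $\langle p, b-\bar b\rangle$, $\langle k,\sigma-\bar\sigma\rangle$, $\langle q, f-\bar f\rangle$ that cancel exactly against the analogous terms produced when substituting $L_2=H_2-\cdots$. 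After this cancellation the cost difference is reduced to
\begin{equation*}
J_2(\bar u_1,u_2;\xi)-J_2(\bar u_1,\bar u_2;\xi)=\mathbb E\int_0^T\Big[H_2(t,\cdot)-H_2(t,\bar\cdot)-\langle H_{2x},x-\bar x\rangle-\langle H_{2y},y-\bar y\rangle-\langle H_{2z},z-\bar z\rangle\Big]dt+\mathbb E\big[h_2(y(0))-h_2(\bar y(0))-\langle h_{2x}(\bar y(0)),y(0)-\bar y(0)\rangle\big],
\end{equation*}
where all Hamiltonian gradients are evaluated along the barred trajectory.

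At this point I would invoke the hypotheses: convexity of $h_2$ gives $h_2(y(0))-h_2(\bar y(0))-\langle h_{2x}(\bar y(0)),y(0)-\bar y(0)\rangle\ge 0$, and convexity of $H_2(t,\cdot,\cdot,\cdot,\bar u_1(t),\cdot)$ in the block of variables $(y,z,u_2,x)$ (with the adjoint arguments $(p,k,q)$ frozen) gives
\begin{equation*}
H_2(t,y,z,\bar u_1,u_2,x,p,k,q)-H_2(t,\bar y,\bar z,\bar u_1,\bar u_2,\bar x,p,k,q)\ge\langle H_{2x},x-\bar x\rangle+\langle H_{2y},y-\bar y\rangle+\langle H_{2z},z-\bar z\rangle+\langle H_{2u_2},u_2-\bar u_2\rangle.
\end{equation*}
Plugging this in, everything cancels except $\mathbb E\int_0^T\langle H_{2u_2},u_2(t)-\bar u_2(t)\rangle dt$, which is nonnegative by the assumed variational inequality \eqref{Maximum principle for leader}. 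Hence $J_2(\bar u_1,u_2;\xi)\ge J_2(\bar u_1,\bar u_2;\xi)$ for every admissible $u_2(\cdot)$, proving optimality.

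The main obstacle I anticipate is purely bookkeeping rather than conceptual: one must be careful that the It\^o expansions of $\langle p,x-\bar x\rangle$ and $\langle q,y-\bar y\rangle$ produce exactly the cross terms $\langle p,b-\bar b\rangle+\langle k,\sigma-\bar\sigma\rangle$ and $\langle q,f-\bar f\rangle$ that are needed to cancel the contributions coming from the $-\langle p,b\rangle-\langle k,\sigma\rangle-\langle q,f\rangle$ substitution, with the signs matching because \eqref{FBSDE} has $-dp$ forward-type and $dq$ backward-at-zero structure. A secondary technical point is justifying that the local-martingale terms are true martingales so their expectations vanish; this follows from assumption (A2) (the linear growth bounds on $f$, $L_{1y}$, $L_{1z_i}$ and the resulting square-integrability of $x(\cdot)$, $\bar x(\cdot)$, $p(\cdot)$, $k(\cdot)$, $q(\cdot)$), together with the square-integrability of the BSDE solutions, via a standard localization argument. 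One should also note the minor typo that the statement writes $H_2(t,\cdot,\cdot,\cdot,\bar u_1(t),\cdot)$ whereas the convexity actually needed is joint convexity in $(y,z,u_2,x)$; this is the natural reading and is what the argument uses.
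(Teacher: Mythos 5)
Your overall strategy is sound and is, in essence, exactly the argument hiding behind the paper's one-line proof: the paper simply cites Theorem 2.2 of Shi and Wu \cite{SW2010}, and the proof of that theorem is the convexity/duality computation you carry out --- substitute $L_2=H_2-\langle p,b\rangle-\langle k,\sigma\rangle-\langle q,f\rangle$, apply It\^o's formula to $\langle p,x-\bar x\rangle$ and $\langle q,y-\bar y\rangle$, and conclude from convexity of $H_2$ and $h_2$ together with the variational inequality \eqref{Maximum principle for leader}. So there is no methodological divergence; you are writing out the proof the paper outsources.

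There is, however, one concrete step in your reduction that does not go through as written. The forward component of the leader's state equation \eqref{fbsde} has the \emph{coupled} initial condition $x(0)=h_{1y}(\bar y(0))^{\top}$; since $y(0)$ depends on the leader's control, a perturbation $u_2\neq\bar u_2$ changes $x(0)$ as well, i.e. $x(0)-\bar x(0)=h_{1y}(y(0))^{\top}-h_{1y}(\bar y(0))^{\top}$, which is not zero in general. The It\^o expansion of $\langle p(t),x(t)-\bar x(t)\rangle$ between $0$ and $T$ (with $p(T)=0$) therefore leaves the boundary term $\mathbb{E}\langle p(0),\,h_{1y}(y(0))^{\top}-h_{1y}(\bar y(0))^{\top}\rangle$ in the cost difference, and this term is absent from your displayed identity. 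It has no definite sign: $p(0)$ is a random vector and $h_{1y}$ need not be constant, so the chain of inequalities does not close. In the Shi--Wu setting the forward initial state is a fixed constant and this term vanishes; it is precisely the initial-condition coupling of \eqref{fbsde} --- the distinguishing feature of the leader's problem --- that breaks the direct transcription. To repair the argument one must either (i) absorb the extra term into the $h_2$-convexity step, which requires replacing the adjoint initial condition $q(0)=h_{2x}(\bar y(0))$ in \eqref{FBSDE} by one of the form $h_{2y}(\bar y(0))^{\top}+h_{1yy}(\bar y(0))^{\top}p(0)$ together with a convexity assumption on $y\mapsto h_2(y)+\langle p(0),h_{1y}(y)^{\top}\rangle$, or (ii) assume $h_{1y}$ is constant, or (iii) first decouple the initial condition, as the paper effectively does in the LQ case where the leader's forward state $\varphi$ starts from the fixed value $\varphi(0)=0$. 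As stated, your identity --- and, to be fair, the paper's bare appeal to \cite{SW2010} --- skips this point.
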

\begin{proof}
It is a direct consequence of Theorem 2.2 of Shi and Wu \cite{SW2010}, without random jumps.
\end{proof}

\section{The linear quadratic problem}

\subsection{Optimization for the follower}

In this section, we consider the following controlled linear BSDE
\begin{equation}\label{BSDE}
\left\{
\begin{aligned}
-dy(t)&=\big[A(t)y(t)+B_1(t)u_1(t)+B_2(t)u_2(t)+C(t)z(t)\big]dt-z(t)dW(t),\ t\in[0,T],\\
  y(T)&=\xi,
\end{aligned}
\right.
\end{equation}
where $A(\cdot),B_1(\cdot),B_2(\cdot),C(\cdot)$ are given deterministic matrix-valued functions. We suppose the following assumption:\\
{\bf (L1)}\ $A(\cdot),C(\cdot)\in L^\infty(0,T;\mathbb{R}^{n\times n})$, $B_1(\cdot),B_2(\cdot)\in L^\infty(0,T;\mathbb{R}^{n\times k})$.

Then we define the cost functional of the follower
\begin{equation}\label{cf1}
\begin{split}
J_1(u_1(\cdot),u_2(\cdot);\xi)&=\frac{1}{2}\mathbb{E}\bigg\{\int_0^T\Big[\langle Q_1(t)y(t),y(t)\rangle+\langle R_1(t)u_1(t),u_1(t)\rangle\\
                              &\qquad\qquad+\langle S_1(t)z(t),z(t)\rangle\Big]dt+\langle G_1y(0),y(0)\rangle\bigg\}.
\end{split}
\end{equation}
And we suppose {\bf (L2)}
\begin{equation*}
\left\{
\begin{aligned}
&Q_1(\cdot),S_1(\cdot)\in L^\infty(0,T;\mathcal{S}^n),\quad Q_1(\cdot),S_1(\cdot)\geq0,\\
&R_1(\cdot)\in L^\infty(0,T;\mathcal{S}^k),\quad R_1(\cdot)>0,\quad G_1\in\mathcal{S}^n,\quad G_1\geq0.
\end{aligned}
\right.
\end{equation*}
{\bf Problem ($LQBSDE_f$)}.\ For given $\xi\in L_{\mathcal{F}_T}^2(\Omega,\mathbb{R}^n)$ and any $u_2(\cdot)\in\mathcal{U}_2[0,T]$, find a $\bar{u}_1(\cdot)\in\mathcal{U}_1[0,T]$ such that
\begin{equation}\label{pf}
J_1(\bar{u}_1(\cdot),u_2(\cdot);\xi)=\min_{u_1(\cdot)\in\mathcal{U}_1[0,T]}J_1(u_1(\cdot),u_2(\cdot);\xi).
\end{equation}

Applying Theorems 3.1 and 3.2, we first obtain the following result.

\begin{theorem}
Under the assumptions {\bf (L1)} and {\bf (L2)}, let $\xi\in L_{\mathcal{F}_T}^2(\Omega,\mathbb{R}^n)$. Giving the leader's strategy $u_2(\cdot)\in\mathcal{U}_2[0,T]$, then the optimal control of the follower $\bar{u}_1(\cdot)$ satisfies the following condition:
\begin{equation}\label{U1}
B_1(t)^\top x(t)+R_1(t)\bar{u}_1(t)=0,\ a.e.t\in[0,T],\ a.s.,
\end{equation}
where $(x(\cdot),\bar{y}(\cdot),\bar{z}(\cdot))$ is the solution to the FBSDE:
\begin{equation}\label{E1}
\left\{
\begin{aligned}
       dx(t)&=\big[A(t)^\top x(t)+Q_1(t)\bar{y}(t)\big]dt+\big[C(t)^\top x(t)+S_1(t)\bar{z}(t)\big]dW(t),\\
-d\bar{y}(t)&=\big[A(t)\bar{y}(t)+B_1(t)\bar{u}_1(t)+B_2(t)u_2(t)+C(t)\bar{z}(t)\big]dt-\bar{z}(t)dW(t),\ t\in[0,T],\\
  \bar{y}(T)&=\xi,\quad \bar{x}(0)=G_1\bar{y}(0).
\end{aligned}
\right.
\end{equation}
\end{theorem}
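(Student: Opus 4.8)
\noindent\textit{Proof sketch.}\quad Theorem 4.1 is a direct specialization of the general maximum principle of Subsection 3.1 to the linear--quadratic data, so the plan is to compute every ingredient of Theorems 3.1 and 3.2 for the present problem and then read off the conclusion. First I would match the coefficients: comparing \eqref{BSDE}, \eqref{cf1} with \eqref{bsde1}, \eqref{cf} gives $f(t,y,z,u_1,u_2)=A(t)y+B_1(t)u_1+B_2(t)u_2+C(t)z$, $L_1(t,y,z,u_1,u_2)=\tfrac12\big(\langle Q_1(t)y,y\rangle+\langle R_1(t)u_1,u_1\rangle+\langle S_1(t)z,z\rangle\big)$, and $h_1(y)=\tfrac12\langle G_1y,y\rangle$. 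Substituting these into the Hamiltonian \eqref{H1} yields
\[
H_1(t,y,z,u_1,u_2,x)=-\langle x,A(t)y+B_1(t)u_1+B_2(t)u_2+C(t)z\rangle-\tfrac12\big(\langle Q_1(t)y,y\rangle+\langle R_1(t)u_1,u_1\rangle+\langle S_1(t)z,z\rangle\big),
\]
so that $H_{1u_1}(t,y,z,u_1,u_2,x)=-B_1(t)^\top x-R_1(t)u_1$ by the symmetry of $R_1$.

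Next I would plug the $y$- and $z$-derivatives $f_y=A(t)$, $f_z=C(t)$, $L_{1y}^\top=Q_1(t)y$, $L_{1z}^\top=S_1(t)z$ and the terminal data $h_{1y}^\top=G_1y$ into the adjoint equation \eqref{sde}; it collapses into exactly the forward SDE for $x(\cdot)$ appearing in \eqref{E1}, with $x(0)=G_1\bar y(0)$. Writing this SDE together with \eqref{BSDE} evaluated along $\bar u_1(\cdot)$ produces the coupled FBSDE \eqref{E1}. It then remains only to rewrite the variational inequality: Theorem 3.1 gives $\langle -B_1(t)^\top x(t)-R_1(t)\bar u_1(t),\,u_1-\bar u_1(t)\rangle\ge0$ for all $u_1\in U$, and since the control domain in the LQ setting is $U=\mathbb{R}^k$ (so that $\bar u_1(t)$ is interior) this is equivalent to $B_1(t)^\top x(t)+R_1(t)\bar u_1(t)=0$, i.e.\ \eqref{U1}. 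For the converse implication I would invoke Theorem 3.2 after checking its structural hypotheses: $h_1$ is convex since $G_1\ge0$, and $H_1(t,\cdot,\cdot,\cdot,u_2,x)$ is concave, being the sum of a map affine in $(y,z,u_1)$ and the negative-semidefinite quadratic $-\tfrac12\big(\langle Q_1y,y\rangle+\langle R_1u_1,u_1\rangle+\langle S_1z,z\rangle\big)$, using $Q_1,S_1,R_1\ge0$.

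The part I expect to need the most care is making this reduction airtight on two technical points rather than any hard estimate. First, Theorem 3.2 additionally asks the Hamiltonian to be Lipschitz continuous, which the quadratic $H_1$ above is not; instead of appealing to that clause verbatim I would argue sufficiency directly, observing that $u_1(\cdot)\mapsto(y(\cdot),z(\cdot))$ is affine through \eqref{BSDE} while the integrand of \eqref{cf1} and $\langle G_1y(0),y(0)\rangle$ are convex, so $u_1(\cdot)\mapsto J_1(u_1(\cdot),u_2(\cdot);\xi)$ is convex and \eqref{U1} characterizes its unique minimizer, $R_1>0$ guaranteeing strict convexity. Second, one should be explicit that passing from the inner-product inequality of Theorem 3.1 to the equality \eqref{U1} uses $U=\mathbb{R}^k$; for a genuinely constrained convex $U$ only the projected (inequality) form of \eqref{U1} would survive, so the statement as written is implicitly over the unconstrained domain.
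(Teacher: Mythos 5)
Your proposal is correct and matches the paper's (implicit) argument: the paper gives no written proof of this theorem, stating only that it follows by ``Applying Theorems 3.1 and 3.2,'' and your computation of $H_1$, $H_{1u_1}$, and the adjoint equation is exactly that specialization. Your two added caveats --- replacing the Lipschitz hypothesis of Theorem 3.2 by a direct convexity argument using $R_1>0$, $Q_1,S_1,G_1\geq 0$, and noting that the equality form of \eqref{U1} tacitly requires $\bar u_1(t)$ to be interior to $U$ (e.g.\ $U=\mathbb{R}^k$) --- are both accurate and address gaps the paper leaves unremarked.
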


Moreover, we have the following relations:
\begin{equation}\label{YZ}
\left\{
\begin{aligned}
&\bar{y}(t)=-P_1(t)x(t)-\phi(t),\\
&\bar{z}(t)=-[P_1(t)S_1(t)+I]^{-1}[P_1(t)C(t)^\top x(t)+\eta(t)],
\end{aligned}
\right.
\end{equation}
where the deterministic and differentiable function $P_1(\cdot)$ is the solution to the following equation
\begin{equation}\label{P1}
\left\{
\begin{aligned}
&\dot{P}_1(t)+A(t)P_1(t)+P_1(t)A(t)^\top-P_1(t)Q_1(t)P_1(t)+B_1(t)R_1^{-1}(t)B_1(t)^\top\\
&+C(t)[P_1(t)S_1(t)+I]^{-1}P_1(t)C(t)^\top=0,\ t\in[0,T],\\
&P_1(T)=0,
\end{aligned}
\right.
\end{equation}
and the process pair $(\phi(\cdot),\eta(\cdot))$ is the solution to the following BSDE
\begin{equation}\label{bsde2}
\left\{
\begin{aligned}
-d\phi(t)&=\Big\{[A(t)-P_1(t)Q_1(t)]\phi(t)+C(t)[P_1(t)S_1(t)+I]^{-1}\eta(t)-B_2(t)u_2(t)\Big\}dt\\
         &\quad-\eta(t)dW(t),\ t\in[0,T],\\
  \phi(T)&=-\xi.
\end{aligned}
\right.
\end{equation}
Therefore, we can get the optimal control of the follower:
\begin{equation}\label{F1}
\bar{u}_1(t)=-R_1^{-1}(t)B_1^{-1}(t)^\top x(t),\ a.e.t\in[0,T],\ a.s.,
\end{equation}
with any given leader's control $u_2(\cdot)\in\mathcal{U}_2[0,T]$ through the adjoint process $x(\cdot)$ of the FBSDE \eqref{E1}.

Next, we intend to obtain the state feedback form of $\bar{u}_1(\cdot)$. Firstly, we need the following lemma.
\begin{lemma}
Under the assumptions {\bf (L1)} and {\bf (L2)}, let $(x(\cdot),\bar{y}(\cdot),\bar{z}(\cdot))$ be the solution to the Hamiltonian system \eqref{U1}-\eqref{E1}. Then
\begin{equation}\label{R1}
x(t)=P_2(t)\bar{y}(t)+\varphi(t),\ a.e.t\in[0,T],\ a.s.,
\end{equation}
where $P_2(\cdot)$ satisfies the following Riccati equation
\begin{equation}\label{P2}
\left\{
\begin{aligned}
&\dot{P}_2(t)-P_2(t)A(t)-A(t)^\top P_2(t)-Q_1(t)+P_2(t)B_1(t)R_1^{-1}(t)B_1(t)^\top P_2(t)\\
&+P_2(t)C(t)[P_1(t)S_1(t)+I]^{-1}P_1(t)C(t)^\top P_2(t)=0,\ t\in[0,T],\\
&P_{2}(0)=G_1,
\end{aligned}
\right.
\end{equation}
and $\varphi(\cdot)$ is given by the following SDE:
\begin{equation}\label{varphi}
\left\{
\begin{aligned}
 d\varphi(t)&=\Big\{\big[-P_2(t)B_1(t)R_1^{-1}(t)B_1(t)^\top-P_2(t)C(t)[P_1(t)S_1(t)+I]^{-1}P_1(t)C(t)^\top\\
            &\qquad+A(t)^\top\big]\varphi(t)+P_2(t)B_2(t)u_2(t)-P_2(t)C(t)[P_1(t)S_1(t)+I]^{-1}\eta(t)\Big\}dt\\
            &\quad+\Big\{[P_1(t)P_2(t)+I][P_1(t)S_1(t)+I]^{-1}C(t)^\top[I+P_2(t)P_1(t)]^{-1}\varphi(t)\\
            &\qquad-[P_1(t)P_2(t)+I][P_1(t)S_1(t)+I]^{-1}C(t)^\top[I+P_2(t)P_1(t)]^{-1}P_2(t)\phi(t)\\
            &\qquad+[P_2(t)-S_1(t)][P_1(t)S_1(t)+I]^{-1}\eta(t)\Big\}dW(t),\ t\in[0,T],\\
  \varphi(0)&=0.
\end{aligned}
\right.
\end{equation}
\end{lemma}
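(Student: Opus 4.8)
The plan is to verify \eqref{R1} by the decoupling-field ansatz for the forward--backward Hamiltonian system \eqref{U1}--\eqref{E1}. I would postulate that $x(t)=P_2(t)\bar y(t)+\varphi(t)$ for some deterministic, differentiable $\mathcal{S}^n$-valued function $P_2(\cdot)$ with $P_2(0)=G_1$ and some It\^o process $\varphi(\cdot)$ with decomposition $d\varphi(t)=\alpha(t)\,dt+\beta(t)\,dW(t)$ and $\varphi(0)=0$; with these normalizations the boundary condition $x(0)=G_1\bar y(0)$ in \eqref{E1} holds automatically, so only $P_2$, $\alpha$, $\beta$ remain to be identified. To do this I would apply It\^o's formula to $P_2(t)\bar y(t)+\varphi(t)$, using the forward It\^o differential of the backward component read off from \eqref{E1}, namely $d\bar y(t)=-[A(t)\bar y(t)+B_1(t)\bar u_1(t)+B_2(t)u_2(t)+C(t)\bar z(t)]\,dt+\bar z(t)\,dW(t)$ together with $\bar u_1(t)=-R_1^{-1}(t)B_1(t)^\top x(t)$ from \eqref{U1}, and then compare the outcome term by term with the prescribed dynamics $dx(t)=[A(t)^\top x(t)+Q_1(t)\bar y(t)]\,dt+[C(t)^\top x(t)+S_1(t)\bar z(t)]\,dW(t)$.

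First I would match the $dW$ coefficients, which gives $P_2\bar z+\beta=C^\top x+S_1\bar z$, i.e. $\beta=C^\top x+(S_1-P_2)\bar z$. Into this I would substitute the already-established relations \eqref{YZ}: the identity $\bar y=-P_1x-\phi$, combined with $x=P_2\bar y+\varphi$, yields $x=[I+P_2P_1]^{-1}(\varphi-P_2\phi)$, while $\bar z=-[P_1S_1+I]^{-1}[P_1C^\top x+\eta]$ eliminates $\bar z$. Carrying out the elimination and simplifying --- using the resolvent identity $[P_1S_1+I]^{-1}P_1=P_1[S_1P_1+I]^{-1}$ and the invertibility of $P_1S_1+I$ and of $I+P_2P_1$ --- reduces $\beta$ to the $dW$ coefficient displayed in \eqref{varphi}. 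This algebraic reduction is the technical core of the lemma.

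Next I would match the $dt$ coefficients. After substituting $x=P_2\bar y+\varphi$ and the expression for $\bar z$ obtained above, the drift identity splits into a part proportional to $\bar y(t)$ and an inhomogeneous part; since the identity must hold for every realization of the state, the coefficient of $\bar y(t)$ must vanish, and a direct rearrangement shows this is exactly the Riccati equation \eqref{P2}, with the initial value $P_2(0)=G_1$ already imposed. The leftover terms --- those built from $u_2$, $\phi$, $\eta$ and $\varphi$ --- then determine the drift $\alpha(t)$, giving the SDE \eqref{varphi}. For the converse, assuming (as established in the subsection on solvability) that \eqref{P1}, \eqref{P2}, \eqref{bsde2} and \eqref{varphi} are solvable, I would set $\tilde x:=P_2\bar y+\varphi$ and reverse the above computation to check that $(\tilde x,\bar y,\bar z)$ solves \eqref{U1}--\eqref{E1}; since that system is equivalent, via \eqref{YZ}, to a well-posed BSDE for $(\bar y,\bar z)$ followed by a linear SDE for $x$, it has a unique solution, whence $\tilde x=x$ and \eqref{R1} follows.

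The hard part will be the $dW$-coefficient identification: converting the implicit relation $\beta=C^\top x+(S_1-P_2)\bar z$ into the explicit matrix expression in \eqref{varphi} requires careful, order-sensitive use of Woodbury-type identities and of the invertibility of $P_1S_1+I$ and $I+P_2P_1$, and these matrices must be shown to remain invertible on all of $[0,T]$ --- which, since $P_1,P_2\ge0$ there, follows because $P_2P_1$ has nonnegative spectrum. Once the diffusion coefficient is pinned down, the drift identification and the derivation of \eqref{P2} are mechanical. The symmetry and positivity of $P_2$ itself, needed for that invertibility argument, is part of the solvability discussion deferred to the next subsection.
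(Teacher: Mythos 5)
Your proposal follows essentially the same route as the paper: postulate the affine ansatz $x=P_2\bar y+\varphi$ with $P_2(0)=G_1$, $\varphi(0)=0$, apply It\^o's formula using the rewritten backward dynamics (with $\bar u_1=-R_1^{-1}B_1^\top x$ and the second relation of \eqref{YZ} substituted), match the $dt$ and $dW$ coefficients against \eqref{E1} to extract the Riccati equation \eqref{P2} and the coefficients $\alpha,\beta$, and then use $\bar y=-P_1x-\phi$ to express $x=[I+P_2P_1]^{-1}(\varphi-P_2\phi)$ and close the SDE \eqref{varphi}. The only additions beyond the paper's argument are your explicit attention to the invertibility of $P_1S_1+I$ and $I+P_2P_1$ and the uniqueness-based converse, both of which the paper leaves implicit (cf.\ Remark 4.1), so the proposal is correct and not substantively different.
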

\begin{proof}
Since $(x(\cdot),\bar{y}(\cdot),\bar{z}(\cdot),\bar{u}_1(\cdot))$ is the optimal 4-tuple of the Hamiltonian system \eqref{U1}-\eqref{E1}, then substituting \eqref{F1} into the system, noting the second relation of \eqref{YZ}, we rewrite the BSDE as:
\begin{equation}\label{BSDE2}
\left\{
\begin{aligned}
-d\bar{y}(t)&=\Big\{A(t)\bar{y}(t)-B_1(t)R_1^{-1}(t)B_1(t)^\top x(t)+B_2(t)u_2(t)\\
            &\qquad-C(t)[P_1(t)S_1(t)+I]^{-1}[P_1(t)C(t)^\top x(t)+\eta(t)]\Big\}dt\\
            &\quad+[P_1(t)S_1(t)+I]^{-1}[P_1(t)C(t)^\top x(t)+\eta(t)]dW(t),\ t\in[0,T],\\
  \bar{y}(T)&=\xi.
\end{aligned}
\right.
\end{equation}
Applying It\^o's formula to \eqref{R1}, where we let $\varphi(\cdot)$ satisfy
\begin{equation}\label{A2}
\left\{
\begin{aligned}
d\varphi(t)&=\alpha(t)dt+\beta(t)dW(t),\ t\in[0,T],\\
 \varphi(0)&=0,
\end{aligned}
\right.
\end{equation}
and make a comparison with system \eqref{E1}, we get
\begin{equation}\label{A1}
\left\{
\begin{aligned}
&\Big\{\dot{P}_2(t)-P_2(t)A(t)-A(t)^\top P_2(t)-Q_1(t)+P_2(t)B_1(t)R_1^{-1}(t)B_1(t)^\top P_2(t)\\
&\quad+P_2(t)C(t)[P_1(t)S_1(t)+I]^{-1}P_1(t)C(t)^\top P_2(t)\Big\}\bar{y}(t)+\Big\{P_2(t)B_1(t)R_1^{-1}(t)B_1(t)^\top\\
&\quad+P_2(t)C(t)[P_1(t)S_1(t)+I]^{-1}P_1(t)C(t)^\top-A(t)^\top\Big\}\varphi(t)-P_2(t)B_2(t)u_2(t)\\
&\ +P_2(t)C(t)[P_1(t)S_1(t)+I]^{-1}\eta(t)+\alpha(t)=0,\\
&\beta(t)=[P_2(t)-S_1(t)][P_1(t)S_1(t)+I]^{-1}[P_1(t)C(t)^\top x(t)+\eta(t)]\\
&\qquad\quad+C(t)^\top P_2(t)\bar{y}(t)+C(t)^\top\varphi(t).
\end{aligned}
\right.
\end{equation}
Therefore, it implies that $P_2(\cdot)$ satisfies the Riccati equation \eqref{P2}. From the first relation in \eqref{YZ} and \eqref{R1}, we have
\begin{equation}\label{A3}
x(t)=-P_2(t)[I+P_2(t)P_1(t)]^{-1}\phi(t)+[I+P_2(t)P_1(t)]^{-1}\varphi(t).
\end{equation}
Thus, combining \eqref{YZ}, \eqref{A2}, \eqref{A1} and \eqref{A3}, we get that $\varphi(\cdot)$ is given by \eqref{varphi}. The proof is complete.
\end{proof}
\begin{Remark}
{\rm We introduce, in Theorem 4.1 and Lemma 4.1, two Riccati equations for $P_1(\cdot)$ and $P_2(\cdot)$ to build the relation between $\bar{y}(\cdot)$ and $x(\cdot)$. Similarly to \cite{LZ01}, we can obtain the unique solvability of these two Riccati equations, and the unique solvability of \eqref{bsde2} and \eqref{varphi}, with the solutions $(\phi(\cdot),\eta(\cdot))$ and $\varphi(\cdot)$ respectively, is evident as they are linear BSDE and SDE with bounded deterministic coefficients and square integrable nonhomogeneous terms.}\
\end{Remark}
Based on the above lemma, we obtain the optimal state feedback of $\bar{u}_1(\cdot)$ in the follower's problem.
\begin{theorem}
Under the assumptions {\bf (L1)} and {\bf (L2)}, for any given $\xi\in L_{\mathcal{F}_T}^2(\Omega,\mathbb{R}^n)$ and $u_2(\cdot)\in\mathcal{U}_2[0,T]$, the problem ($LQBSDE_f$) is solvable with the optimal strategy $\bar{u}_1(\cdot)$ being of a feedback representation
\begin{equation}\label{FC1}
\bar{u}_1(t)=-R_1^{-1}(t)B_1(t)^\top[P_2(t)\bar{y}(t)+\varphi(t)],
\end{equation}
where $P_2(\cdot)$ and $\varphi(\cdot)$ are the solutions to \eqref{P2} and \eqref{varphi}, respectively. The optimal state trajectory $(\bar{y}(\cdot),\bar{z}(\cdot))$ is the unique solution to the BSDE:
\begin{equation}\label{bsde3}
\left\{
\begin{aligned}
-d\bar{y}(t)&=\Big\{\big[A(t)-B_1(t)R_1^{-1}(t)B_1(t)^\top P_2(t)\big]\bar{y}(t)-B_1(t)R_1^{-1}(t)B_1(t)^\top\varphi(t)\\
            &\qquad+B_2(t)u_2(t)+C(t)\bar{z}(t)\Big\}dt-\bar{z}(t)dW(t),\ t\in[0,T],\\
  \bar{y}(T)&=\xi.
\end{aligned}
\right.
\end{equation}
\end{theorem}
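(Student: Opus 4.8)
The plan is to assemble the statement from the necessary condition (Theorem 4.1), the decoupling relation of Lemma 4.1, and the sufficient condition (Theorem 3.2), after first checking that the induced closed-loop equation is well posed. By Remark 4.1 the Riccati equation \eqref{P2} has a unique deterministic, continuously differentiable — hence bounded on $[0,T]$ — solution $P_2(\cdot)$, while \eqref{bsde2} and \eqref{varphi} are uniquely solvable with $(\phi(\cdot),\eta(\cdot))$ and $\varphi(\cdot)\in L^2_\mathcal F(0,T;\mathbb R^n)$, since they are linear (B)SDEs with bounded deterministic coefficients and square-integrable data. Hence the feedback law \eqref{FC1} is well defined, and substituting it into \eqref{BSDE} and collecting terms turns the state equation into \eqref{bsde3}. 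Equation \eqref{bsde3} is a linear BSDE with bounded coefficients $A-B_1R_1^{-1}B_1^\top P_2$, $C$ and square-integrable free term $-B_1R_1^{-1}B_1^\top\varphi+B_2u_2$, so by the classical existence-uniqueness theory it has a unique adapted square-integrable solution $(\bar y(\cdot),\bar z(\cdot))$; then \eqref{FC1} is $\mathcal F_t$-adapted and square integrable, i.e. $\bar u_1(\cdot)\in\mathcal U_1[0,T]$.

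To see that this $\bar u_1(\cdot)$ is in fact optimal for Problem ($LQBSDE_f$), I would first verify — by running the computation in the proof of Lemma 4.1 in reverse — that the quadruple $(x(\cdot),\bar y(\cdot),\bar z(\cdot),\bar u_1(\cdot))$ with $x(\cdot):=P_2(\cdot)\bar y(\cdot)+\varphi(\cdot)$ solves the full Hamiltonian system \eqref{U1}--\eqref{E1}: applying It\^o's formula to $P_2(t)\bar y(t)+\varphi(t)$ and invoking precisely the coefficients prescribed by \eqref{P2} and \eqref{varphi} makes the drift and diffusion of $x(\cdot)$ coincide with the adjoint SDE in \eqref{E1}, and the identity $x=P_2\bar y+\varphi$ together with \eqref{FC1} is exactly \eqref{U1}, i.e. $H_{1u_1}(t,\bar y(t),\bar z(t),\bar u_1(t),u_2(t),x(t))=0$. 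Thus the maximum condition \eqref{Maximum condition} holds for every $u_1(\cdot)\in U$. Since $h_1(y(0))=\tfrac12\langle G_1y(0),y(0)\rangle$ is convex ($G_1\ge0$) and, by {\bf (L2)}, the Hamiltonian $H_1(t,y,z,u_1,u_2,x)=-\langle x,Ay+B_1u_1+B_2u_2+Cz\rangle-\tfrac12\big(\langle Q_1y,y\rangle+\langle R_1u_1,u_1\rangle+\langle S_1z,z\rangle\big)$ is concave in $(y,z,u_1)$, Theorem 3.2 gives optimality; strict convexity of $J_1(\cdot,u_2;\xi)$ in $u_1$ (because $R_1>0$) yields uniqueness of the optimal control, and uniqueness of $(\bar y(\cdot),\bar z(\cdot))$ as the solution of \eqref{bsde3} pins down the optimal trajectory.

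I expect the reverse verification above to be the step requiring the most care, since Lemma 4.1 is stated only in the forward direction and one has to confirm that the particular drift and diffusion of \eqref{varphi} (and the boundary data) are exactly those compatible with $x=P_2\bar y+\varphi$ along the closed loop — in effect, that the decoupling of the FBSDE \eqref{E1} is reversible. A second, minor, point is that the quadratic $H_1$ displayed above is only locally, not globally, Lipschitz in $(y,z)$, contrary to the literal hypothesis of Theorem 3.2; this can be removed by a truncation argument or, more simply, by bypassing Theorem 3.2 and proving optimality via a completion-of-squares computation with $d\langle P_2(t)\bar y(t),\bar y(t)\rangle$ along an arbitrary admissible pair, which gives $J_1(u_1,u_2;\xi)\ge J_1(\bar u_1,u_2;\xi)$ with equality only at $u_1=\bar u_1$.
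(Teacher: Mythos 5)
Your argument is correct and rests on the same decoupling machinery as the paper (the Riccati equations \eqref{P1} and \eqref{P2}, the pair $(\phi(\cdot),\eta(\cdot))$ from \eqref{bsde2} and the process $\varphi(\cdot)$ from \eqref{varphi}), but its logical direction is different and more complete. The paper's own proof is essentially one sentence: solve the four auxiliary equations in order and read the feedback form \eqref{FC1} off the relations \eqref{A3} and \eqref{YZ}; optimality is inherited from the necessary condition of Theorem 4.1 and the representation $x=P_2\bar y+\varphi$ of Lemma 4.1, both of which presuppose that an optimal control already exists. You instead construct the candidate, check that the closed-loop BSDE \eqref{bsde3} is well posed, verify in reverse that $x:=P_2\bar y+\varphi$ together with \eqref{FC1} solves the Hamiltonian system \eqref{U1}--\eqref{E1}, and then invoke the sufficient condition; this is precisely the verification step that actually establishes solvability of the follower's problem, rather than merely a representation of a presumed optimum, and the paper omits it. Your two caveats are well placed: the reverse It\^o computation does require checking that the diffusion coefficient prescribed in \eqref{varphi} reproduces $C(t)^\top x(t)+S_1(t)\bar z(t)$ through the second relation of \eqref{YZ}, and the quadratic Hamiltonian $H_1$ is indeed only locally Lipschitz, so the completion-of-squares computation with $d\langle P_2(t)\bar y(t),\bar y(t)\rangle$ (or an a priori existence argument from strict convexity and coercivity of $J_1$ in $u_1$) is the cleaner way to close the optimality claim. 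The one dependency you share with the paper is the solvability of \eqref{P1} and \eqref{P2} themselves, which Remark 4.1 delegates to the literature rather than proving.
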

\begin{proof}
For given $\xi$ and $u_2(\cdot)$, let $P_1(\cdot)$ satisfy \eqref{P1}, by the standard BSDE theory we can solve \eqref{bsde2} to obtain $(\phi(\cdot),\eta(\cdot))$. Let $P_2(\cdot)$ satisfy \eqref{P2}, and by the standard SDE theory we can solve \eqref{varphi} to obtain $\varphi(\cdot)$. Thus the feedback representation \eqref{FC1} can be obtained from \eqref{A3} and \eqref{YZ}. The proof is complete.
\end{proof}
We can draw a flow chart to show the process in which we could represent the optimal feedback $\bar{u}_1(\cdot)$.\\

\tikzstyle{startstop} = [rectangle,minimum width=4cm,minimum height=2cm,text centered,text width =3cm,draw=black,fill=orange!30]
\tikzstyle{process} = [rectangle,minimum width=3cm,minimum height=2cm,text centered,text width =4cm,draw=black,fill=orange!30]
\tikzstyle{arrow} = [thick,->,>=stealth]
\tikzstyle{arrow2} = [thick,<-,>=stealth]
\begin{tikzpicture}[node distance=1cm]
\node (start) [startstop]
{
$$\left.
\begin{aligned}
&Given\ u_2\ and\ \xi\\
&P_1\ satisfies\ \eqref{P1}
\end{aligned}
\right\}$$
};
\node (process1) [process,right of =start,xshift=4cm]
{
$$\left.
\begin{aligned}
&(\phi,\eta)\ satisfies\ \eqref{bsde2}\\
&P_2\ satisfies\ \eqref{P2}
\end{aligned}
\right\}$$
};
\node (process2) [process,right of =process1,xshift=4cm] {$\varphi\ satisfies\ \eqref{varphi}$};
\node (process3) [process,below of =process2,yshift=-3cm] {$x\ satisfies\ \eqref{A3}$};
\node (process4) [process,left of=process3, below of =process1 ,xshift=1cm,yshift=-3cm] {$(\bar{y},\bar{z})\ satisfies\ \eqref{YZ}$};
\node (process5) [process, left of =process4, below of=process2, xshift=-10cm,yshift=-3cm] {$\bar{u}_1$\ satisfies\ \eqref{FC1}};
\draw [arrow] (start) --(process1);
\draw [arrow] (process1) --(process2);
\draw [arrow] (process2) --(process3);
\draw [arrow] (process3) --(process4);
\draw [arrow] (process1) --(process4);
\draw [arrow] (process2) -- (process5);
\draw [arrow] (process4) -- (process5);
\end{tikzpicture}

\subsection{Optimization for the leader}

In the above subsection, for any given $\xi$ and $u_2(\cdot)$, we have obtained the feedback form \eqref{FC1} of the follower's optimal control $\bar{u}_1(\cdot)$. Now we let problem ($LQBSDE_f$) be uniquely solvable for any given $(\xi,u_2(\cdot))\in L_{\mathcal{F}_T}^2(\Omega;\mathbb{R}^n)\times\mathcal{U}_2[0,T]$. Since the follower's optimal strategy $\bar{u}_1(\cdot)$ of feedback form \eqref{FC1} can be determined by the leader, the leader could take it into account in finding and announcing his optimal strategy. Consequently, from \eqref{varphi} and \eqref{bsde3}, noting \eqref{YZ},  the leader has the following state equation:
\begin{equation}\label{LeaderFBSDE}
\left\{
\begin{aligned}
 d\varphi(t)&=\Big\{\big[A(t)^\top-P_2(t)B_1(t)R_1^{-1}(t)B_1(t)^\top\big]\varphi(t)+P_2(t)C(t)[P_1(t)S_1(t)+I]^{-1}\\
            &\qquad\times P_1(t)C(t)^\top P_2(t)\bar{y}(t)+P_2(t)C(t)\bar{z}(t)+P_2(t)B_2(t)u_2(t)\Big\}dt\\
            &\quad+\Big\{\Big[[P_1(t)P_2(t)+I][P_1(t)S_1(t)+I]^{-1}C(t)^\top-[P_2(t)-S_1(t)][P_1(t)S_1(t)+I]^{-1}\\
            &\qquad\times P_1(t)C(t)^\top\Big]\varphi(t)+\Big[[P_1(t)P_2(t)+I][P_1(t)S_1(t)+I]^{-1}C(t)^\top P_2(t)\\
            &\qquad-[P_2(t)-S_1(t)][P_1(t)S_1(t)+I]^{-1}P_1(t)C(t)^\top P_2(t)\Big]\bar{y}(t)\\
            &\qquad-[P_2(t)-S_1(t)]\bar{z}(t)\Big\}dW(t),\\
-d\bar{y}(t)&=\Big\{\big[A(t)-B_1(t)R_1^{-1}(t)B_1(t)^\top P_2(t)\big]\bar{y}(t)-B_1(t)R_1^{-1}(t)B_1(t)^\top\varphi(t)\\
            &\qquad+B_2(t)u_2(t)+C(t)\bar{z}(t)\Big\}dt-\bar{z}(t)dW(t),\ t\in[0,T],\\
  \bar{y}(T)&=\xi,\ \varphi(0)=0.
\end{aligned}
\right.
\end{equation}
For any given $\xi$ and $u_2(\cdot)$, from the proof of Theorem 4.2, the solvability for the solution $(\varphi(\cdot),\bar{y}(\cdot),\bar{z}(\cdot))$ to \eqref{LeaderFBSDE} can be guaranteed though it is fully coupled.

The leader would like to choose his control $u_2(\cdot)\in\mathcal{U}_2[0,T]$ such that his cost functional
\begin{equation}\label{cf2}
\begin{aligned}
J_2(\bar{u}_1(\cdot),u_2(\cdot);\xi)&=\frac{1}{2}\mathbb{E}\bigg\{\int_0^T\Big[\langle Q_2(t)\bar{y}(t),\bar{y}(t)\rangle+\langle R_2(t)u_2(t),u_2(t)\rangle\\
                                    &\qquad\qquad+\langle S_2(t)\bar{z}(t),\bar{z}(t)\rangle\Big]dt+\langle G_{2}\bar{y}(0),\bar{y}(0)\rangle\bigg\}
\end{aligned}
\end{equation}
is minimized. And we suppose {\bf (L3)}
\begin{equation*}
\left\{
\begin{aligned}
&Q_2(\cdot),S_2(\cdot)\in L^\infty(0,T;\mathcal{S}^n),\quad Q_2(\cdot),S_2(\cdot)\geq0,\\
&R_2(\cdot)\in L^\infty(0,T;\mathcal{S}^k),\quad R_2(\cdot)>0,\quad G_2\in\mathcal{S}^n,\quad G_2\geq0.
\end{aligned}
\right.
\end{equation*}

The optimal control problem for the leader can be stated as follows.\\
{\bf Problem ($LQFBSDE_l$)}. For given $\xi\in L_{\mathcal{F}_T}^2(\Omega,\mathbb{R}^n)$, find a $\bar{u}_2(\cdot)\in\mathcal{U}_2[0,T]$ such that
\begin{equation}
J_2(\bar{u}_1(\cdot),\bar{u}_2(\cdot);\xi)=\min_{u_2(\cdot)\in\mathcal{U}_2[0,T]}J_2(\bar{u}_1(\cdot),u_2(\cdot);\xi).
\end{equation}

Applying Theorems 3.3 and 3.4, we first obtain the following result.

\begin{theorem}
Let the assumptions {\bf (L1)}, {\bf (L2)} and {\bf (L3)} hold. Let $(\bar{\varphi}(\cdot),\bar{y}(\cdot),\bar{z}(\cdot),\bar{u}_2(\cdot))$ be the optimal 4-tuple of system \eqref{LeaderFBSDE} for the terminal state $\xi\in L_{\mathcal{F}_T}^2(\Omega,\mathbb{R}^n)$. Then we have the following stationary condition:
\begin{equation}\label{OCL}
B_2(t)^\top P_2(t)p(t)+B_2(t)^\top q(t)+R_2(t)\bar{u}_2(t)=0,\ a.e.t\in[0,T],\ a.s.,
\end{equation}
where $(p(\cdot),k(\cdot),q(\cdot))$ satisfies the following adjoint FBSDE
\begin{equation}\label{Ad1}
\left\{
\begin{aligned}
 dq(t)&=\Big\{P_2(t)C(t)P_1(t)[P_1(t)S_1(t)+I]^{-1}C(t)^\top P_2(t)p(t)+\big[A(t)^\top-P_2(t)B_1(t)\\
      &\qquad\times R_1^{-1}(t)B_1(t)^\top\big]q(t)+\big[P_2(t)C(t)[P_1(t)S_1(t)+I]^{-1}[P_1(t)P_2(t)+I]\\
      &\qquad-P_2(t)C(t)P_1(t)[P_1(t)S_1(t)+I]^{-1}[P_2(t)-S_1(t)]\big]k(t)+Q_2(t)\bar{y}(t)\Big\}dt\\
      &\quad+\Big\{C(t)^\top P_2(t)p(t)+C(t)^\top q(t)-[P_2(t)-S_1(t)]k(t)+S_2(t)\bar{z}(t)\Big\}dW(t),\\
-dp(t)&=\Big\{\big[A(t)-B_1(t)R_1^{-1}(t)B_1(t)^\top P_2(t)\big]p(t)-B_1(t)R_1^{-1}(t)B_1(t)^\top q(t)\\
      &\qquad+\big[C(t)[P_1(t)S_1(t)+I]^{-1}[P_1(t)P_2(t)+I]-C(t)P_1(t)[P_1(t)S_1(t)+I]^{-1}\\
      &\qquad\times[P_2(t)-S_1(t)]\big]k(t)\Big\}dt-k(t)dW(t),\ t\in[0,T],\\
  q(0)&=G_2\bar{y}(0),\ p(T)=0.
\end{aligned}
\right.
\end{equation}
\end{theorem}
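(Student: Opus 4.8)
The plan is to recognize Problem ($LQFBSDE_l$) as a particular instance of the leader's optimal control problem of an FBSDE studied in Subsection 3.2, and then to read off \eqref{OCL} and \eqref{Ad1} by specializing the abstract stationary condition \eqref{Maximum principle for leader} and adjoint system \eqref{FBSDE} of Theorem 3.3 to the present linear-quadratic data, i.e.\ by computing the relevant partial derivatives of the Hamiltonian $H_2$ of \eqref{H2}.

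First I would cast \eqref{LeaderFBSDE} in the canonical form of \eqref{fbsde}: the forward variable is $\bar\varphi(\cdot)$ (playing the role of $x(\cdot)$), the backward pair is $(\bar y(\cdot),\bar z(\cdot))$, the running cost is $L_2(t,\bar y,\bar z,u_2)=\tfrac12\big[\langle Q_2\bar y,\bar y\rangle+\langle R_2u_2,u_2\rangle+\langle S_2\bar z,\bar z\rangle\big]$ and the initial cost is $h_2(\bar y(0))=\tfrac12\langle G_2\bar y(0),\bar y(0)\rangle$. Since the follower's optimal feedback \eqref{FC1} has already been substituted, the drift $b$ and diffusion $\sigma$ of the $\bar\varphi$-equation in \eqref{LeaderFBSDE} and the generator $f$ of the $\bar y$-equation depend only on $(\varphi,\bar y,\bar z,u_2)$, are all affine in these arguments, and --- a point that matters below --- $\sigma$ does not contain $u_2$. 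Thus the Hamiltonian \eqref{H2} reduces to $H_2=\langle p,b\rangle+\langle k,\sigma\rangle+\langle q,f\rangle+L_2$. Because $b,\sigma,f$ are affine, the quantities $H_{2x}$, $H_{2y}$, $H_{2z}$ appearing in \eqref{FBSDE} --- here the partial derivatives of $H_2$ with respect to $\varphi$, $\bar y$, $\bar z$ --- are obtained by pairing the transposed coefficient blocks of \eqref{LeaderFBSDE} with $p$, $k$, $q$ and adding the contributions $Q_2\bar y$ and $S_2\bar z$ coming from $L_2$; substituting these, together with the boundary data $p(T)=0$ and $q(0)=G_2\bar y(0)$, into \eqref{FBSDE} yields exactly the adjoint FBSDE \eqref{Ad1}. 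In the same way, $H_{2u_2}=B_2^\top P_2\,p+B_2^\top q+R_2\,u_2$, since among all coefficients only $P_2B_2u_2$ in $b$, $B_2u_2$ in $f$ and $\tfrac12\langle R_2u_2,u_2\rangle$ in $L_2$ involve $u_2$. Theorem 3.3 then gives $\big\langle H_{2u_2}(t,\bar y,\bar z,\bar u_2,\bar\varphi,p,k,q),\,u_2-\bar u_2(t)\big\rangle\ge 0$ for a.e.\ $t\in[0,T]$, a.s., and all $u_2\in U$; since $R_2(\cdot)>0$ the map $u_2\mapsto H_2$ is strictly convex, so in the unconstrained case (or whenever $\bar u_2(\cdot)$ is interior to $U$) this variational inequality is equivalent to $H_{2u_2}=0$, which is precisely \eqref{OCL}.

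I expect the only genuine difficulty to be bookkeeping: transposing the block coefficients of \eqref{LeaderFBSDE} --- in particular the factors $[P_1S_1+I]^{-1}$, $[P_1P_2+I]$ and $[P_2-S_1]$ entering the diffusion of the $\bar\varphi$-equation --- and matching them term by term against \eqref{Ad1}, using the symmetry of $P_1(\cdot),P_2(\cdot),S_1(\cdot),Q_2(\cdot),S_2(\cdot),G_2$ and identities such as $[P_1S_1+I]^{-1}P_1=P_1[S_1P_1+I]^{-1}$ to simplify. This is lengthy but routine matrix algebra rather than a conceptual step. Finally, well-posedness of the linear FBSDE \eqref{Ad1} follows from the boundedness of its coefficients under {\bf (L1)}--{\bf (L3)} and Remark 4.1, by the same decoupling/fixed-point argument used for \eqref{LeaderFBSDE} in the proof of Theorem 4.2; and since $H_2$ is convex in $(\varphi,\bar y,\bar z,u_2)$ and $h_2$ is convex, Theorem 3.4 moreover shows that \eqref{OCL} is sufficient for optimality. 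This completes the proof.
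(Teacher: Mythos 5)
Your proposal is correct and follows exactly the route the paper intends: the paper gives no written proof of this theorem beyond the phrase ``Applying Theorems 3.3 and 3.4,'' and your argument---casting \eqref{LeaderFBSDE} as an instance of \eqref{fbsde} with $\bar\varphi$ in the role of the forward state, computing $H_{2u_2}$, $H_{2x}$, $H_{2y}$, $H_{2z}$ from the affine coefficients, and reading off \eqref{OCL} and \eqref{Ad1} from \eqref{Maximum principle for leader} and \eqref{FBSDE}---is precisely that application spelled out. Your added remark that the variational inequality collapses to the equality \eqref{OCL} only when $\bar u_2$ is interior to $U$ (or $U=\mathbb{R}^k$) is a point the paper leaves implicit, and is a fair observation rather than a deviation.
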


\begin{Remark}
\rm For notations simplicity, we still denote the optimal state $(\bar{y}(\cdot),\bar{z}(\cdot))$ of the leader as that in the follower's problem, while denote the third (auxiliary) optimal state of the leader as $\bar{\varphi}(\cdot)$ when in the follower's problem $\varphi(\cdot)$ is the adjoint process.
\end{Remark}

Similar to \cite{Yong02}, here we use the same technique to solve the optimization problem for the leader. We expect to obtain some kind of state feedback representation for the optimal control $\bar{u}_2(\cdot)$ via a certain Riccati equation. To make the problem clearer, let us put \eqref{LeaderFBSDE}, \eqref{OCL} and \eqref{Ad1} together:
\begin{equation}\label{FBSDEs}
\left\{
\begin{aligned}
d\bar{\varphi}(t)&=\Big\{\big[A(t)^\top-P_2(t)B_1(t)R_1^{-1}(t)B_1(t)^\top\big]\bar{\varphi}(t)+P_2(t)C(t)[P_1(t)S_1(t)+I]^{-1}\\
                 &\qquad\times P_1(t)C(t)^\top P_2(t)\bar{y}(t)+P_2(t)C(t)\bar{z}(t)+P_2(t)B_2(t)\bar{u}_2(t)\Big\}dt\\
                 &\quad+\Big\{\Big[[P_1(t)P_2(t)+I][P_1(t)S_1(t)+I]^{-1}C(t)^\top-[P_2(t)-S_1(t)]\\
                 &\qquad\times [P_1(t)S_1(t)+I]^{-1}P_1(t)C(t)^\top\Big]\bar{\varphi}(t)+\Big[[P_1(t)P_2(t)+I]\\
                 &\qquad\times[P_1(t)S_1(t)+I]^{-1}C(t)^\top P_2(t)-[P_2(t)-S_1(t)][P_1(t)S_1(t)+I]^{-1}\\
                 &\qquad\times P_1(t)C(t)^\top P_2(t)\Big]\bar{y}(t)-[P_2(t)-S_1(t)]\bar{z}(t)\Big\}dW(t),\\
            dq(t)&=\Big\{P_2(t)C(t)P_1(t)[P_1(t)S_1(t)+I]^{-1}C(t)^\top P_2(t)p(t)+\big[A(t)^\top-P_2(t)B_1(t)\\
                 &\qquad\times R_1^{-1}(t)B_1(t)^\top\big]q(t)+\big[P_2(t)C(t)[P_1(t)S_1(t)+I]^{-1}[P_1(t)P_2(t)+I]\\
                 &\qquad-P_2(t)C(t)P_1(t)[P_1(t)S_1(t)+I]^{-1}[P_2(t)-S_1(t)]\big]k(t)+Q_2(t)\bar{y}(t)\Big\}dt\\
                 &\quad+\Big\{C(t)^\top P_2(t)p(t)+C(t)^\top q(t)-[P_2(t)-S_1(t)]k(t)+S_2(t)\bar{z}(t)\Big\}dW(t),\\
     -d\bar{y}(t)&=\Big\{\big[A(t)-B_1(t)R_1^{-1}(t)B_1(t)^\top P_2(t)\big]\bar{y}(t)-B_1(t)R_1^{-1}(t)B_1(t)^\top\bar{\varphi}(t)\\
                 &\qquad+B_2(t)\bar{u}_2(t)+C(t)\bar{z}(t)\Big\}dt-\bar{z}(t)dW(t),\\
           -dp(t)&=\Big\{\big[A(t)-B_1(t)R_1^{-1}(t)B_1(t)^\top P_2(t)\big]p(t)-B_1(t)R_1^{-1}(t)B_1(t)^\top q(t)\\
                 &\qquad+\big[C(t)[P_1(t)S_1(t)+I]^{-1}[P_1(t)P_2(t)+I]-C(t)P_1(t)[P_1(t)S_1(t)+I]^{-1}\\
                 &\qquad\times[P_2(t)-S_1(t)]\big]k(t)\Big\}dt-k(t)dW(t),\ t\in[0,T],\\
 \bar{\varphi}(0)&=0,\ q(0)=G_2\bar{y}(0),\ \bar{y}(T)=\xi,\ p(T)=0,\\
\end{aligned}
\right.
\end{equation}
Note that the equations for $(\bar{y}(\cdot),\bar{z}(\cdot),\bar{\varphi}(\cdot))$ form a coupled FBSDE, and those for $(p(\cdot),k(\cdot),q(\cdot))$ form another coupled FBSDE. These two FBSDEs are further coupled through their initial and/or terminal conditions and \eqref{OCL}. Hence, the above is a coupled system of FBSDEs. We may look at the above in a different way. To this end, let us set (The time variable $t$ is omitted.)
\begin{equation}\label{definition}
X=
\begin{pmatrix}
\bar{\varphi}\\
q
\end{pmatrix},\ \
Y=
\begin{pmatrix}
p\\
\bar{y}
\end{pmatrix},\ \
Z=
\begin{pmatrix}
k\\
\bar{z}
\end{pmatrix},\ \
\hat{G}_2=
\begin {pmatrix}
0&0\\
0&G_2
\end{pmatrix},\ \
\hat{\xi}=
\begin {pmatrix}
0\\
\xi
\end{pmatrix},
\end{equation}
and
\begin{equation*}
\hat{A}_1=
\begin{pmatrix}
A-B_1R_1^{-1}B_1^\top P_2&0\\
0&A-B_1R_1^{-1}B_1^\top P_2
\end{pmatrix},\ \
\hat{B}_1=
\begin{pmatrix}
P_2B_2\\
0
\end{pmatrix},\ \
\hat{B}_2=
\begin{pmatrix}
0\\
B_2
\end{pmatrix},\ \
\end{equation*}
\begin{equation*}
\hat{C}_1=
\begin{pmatrix}
(P_1P_2+I)(P_1S_1+I)^{-1}C^\top-(P_2-S_1)(P_1P_2+I)^{-1}P_1C^\top&0\\
0&C^\top
\end{pmatrix}
\end{equation*}
\begin{equation*}
\hat{D}_1=
\begin{pmatrix}
0&P_2C\\
P_2C(P_1S_1+I)^{-1}(P_1P_2+I)-P_2CP_1(P_1S_1+I)^{-1}(P_2-S_1)&0
\end{pmatrix},
\end{equation*}
\begin{equation*}
\end{equation*}\begin{equation*}
\hat{F}_1=
\begin {pmatrix}
0&P_2C(P_1S_1+I)^{-1}P_1C^\top P_2\\
P_2CP_1(P_1S_1+I)^{-1}C^\top P_2&Q_2
\end{pmatrix},\ \
\end{equation*}
\begin{equation*}
\hat{F}_{2}=
\begin {pmatrix}
0&-B_1R_1^{-1}B_1^\top\\
-B_1R_1^{-1}B_1^\top&0
\end{pmatrix},\ \
\hat{S}_1=
\begin {pmatrix}
0&-(P_2-S_1)\\
-(P_2-S_1)&S_2
\end{pmatrix},
\end{equation*}
then \eqref{FBSDEs} is equivalent to the FBSDE:
\begin{equation}\label{KWFBSDE}
\left\{
\begin{aligned}
 dX(t)&=\big[\hat{A}_1^\top X(t)+\hat{F}_1Y(t)+\hat{D}_1Z(t)+\hat{B}_1\bar{u}_2(t)\big]dt\\
      &\quad+\big[\hat{C}_1X(t)+\hat{D}_1^\top Y(t)+\hat{S}_1Z(t)\big]dW(t),\\
-dY(t)&=\big[\hat{F}_2X(t)+\hat{A}_1Y(t)+\hat{C}_1^\top Z(t)+\hat{B}_2\bar{u}_2(t)\big]dt-Z(t)dW(t),\ t\in[0,T],\\
  X(0)&=\hat{G}_2Y(0),\ Y(T)=\hat{\xi},\\
&\hspace{-1cm}\hat{B}_1^\top Y(t)+\hat{B}_2^\top X(t)+R_2(t)\bar{u}_2(t)=0,\ a.e.t\in[0,T],\ a.s.
\end{aligned}
\right.
\end{equation}

Noting that \eqref{KWFBSDE} is a coupled FBSDE with initial condition coupling, while the diffusion term of the forward equation is control independent. This is a new feature different from that in the (forward) leader-follower differential game studied in \cite{Yong02}. In the following, we try to decouple the above FBSDE using the similar technique as in Subsection 4.1.

Suppose we have the relation
\begin{equation}\label{YX}
\left\{
\begin{aligned}
&Y(t)=-\Pi_1(t)X(t)-\tilde{\phi}(t),\quad \Pi_1(T)=0,\\
&-d\tilde{\phi}(t)=\tilde{\alpha}(t)dt-\tilde{\eta}(t)dW(t),\quad\tilde{\phi}(T)=-\hat{\xi}.
\end{aligned}
\right.
\end{equation}
Applying It\^{o}'s formula to $Y(\cdot)$, and make comparison with system \eqref{KWFBSDE}, we have
\begin{equation}\label{Z1}
\left\{
\begin{aligned}
&\hat{F}_2X(t)+\hat{A}_1Y(t)+\hat{C}_1^\top Z(t)+\hat{B}_2\bar{u}_2(t)-\Pi_1\hat{A}_1^\top X(t)-\Pi_1\hat{F}_1Y(t)\\
&-\Pi_1\hat{D}_1Z(t)-\Pi_1\hat{B}_1\bar{u}_2(t)-\dot{\Pi}X(t)+\tilde{\alpha}(t)=0,\\
&Z(t)+\Pi_1\hat{C}_1X(t)+\Pi_1\hat{D}_1^\top Y(t)+\Pi_1\hat{S}_1Z(t)+\tilde{\eta}(t)=0.
\end{aligned}
\right.
\end{equation}
Noting the last relation in \eqref{KWFBSDE}, it implies the following equation of $\Pi_1(\cdot)$ and $\tilde{\phi}(\cdot)$:
\begin{equation}\label{Pi1}
\left\{
\begin{aligned}
&\dot{\Pi}_1+\hat{A}_1\Pi_1+\Pi_1\hat{A}_1^\top-\Pi_1\hat{F}_1\Pi_1+(\Pi_1\hat{B}_1-\hat{B}_2)R_2^{-1}(\hat{B}_1^\top\Pi_1-\hat{B}_2^\top)\\
&+(\hat{C}_1^\top-\Pi_1\hat{D}_1)(I+\Pi_1\hat{S}_1)^{-1}\Pi_1(\hat{C}_1-\hat{D}_1^\top\Pi_1)-\hat{F}_{2}=0,\\
&\Pi_1(T)=0,
\end{aligned}
\right.
\end{equation}
\begin{equation}\label{phi}
\left\{
\begin{aligned}
-d\tilde{\phi}(t)&=\Big\{\big[\hat{A}_1-\Pi_1\hat{F}_1+(\Pi_1\hat{B}_1-\hat{B}_2)R_2^{-1}\hat{B}_1^\top+(\Pi_1\hat{D}_1-\hat{C}_1^\top)(I+\Pi_1\hat{S}_1)^{-1}\Pi_1\hat{D}_1^\top\big]\tilde{\phi}(t)\\
                 &\qquad-(\Pi_1\hat{D}_1-\hat{C}_1^\top)(I+\Pi_1\hat{S}_1)^{-1}\tilde{\eta}(t)\Big\}dt-\tilde{\eta}(t)dW(t),\\
  \tilde{\phi}(T)&=-\hat{\xi}.
\end{aligned}
\right.
\end{equation}

Then we suppose
\begin{equation}\label{XY}
\left\{
\begin{aligned}
&X(t)=\Pi_{2}(t)Y(t)+\tilde{\varphi}(t),\ \Pi_2(0)=\hat{G}_2,\\
&d\tilde{\varphi}(t)=\tilde{\beta}(t)dt+\tilde{\gamma}(t)dW(t),\ \tilde{\varphi}(0)=0.
\end{aligned}
\right.
\end{equation}
Applying the It\^{o}'s formula to $X(\cdot)$, and make comparison with system \eqref{KWFBSDE} we have
\begin{equation}\label{Z2}
\left\{
\begin{aligned}
&\hat{A}_1^\top X(t)+\hat{F}_1Y(t)+\hat{D}_1Z(t)+\hat{B}_1\bar{u}_2(t)+\Pi_2\hat{F}_2X(t)+\Pi_2\hat{A}_1Y(t)+\Pi_2\hat{C}_1^\top Z(t)\\
&+\Pi_2\hat{B}_2\bar{u}_2(t)-\dot{\Pi}_2Y(t)-\tilde{\beta}(t)=0,\\
&\Pi_2Z(t)+\tilde{\gamma}(t)-\hat{C}_1X(t)-\hat{D}_1^\top Y(t)-\hat{S}_1Z(t)=0.
\end{aligned}
\right.
\end{equation}
From the above relationship between $X(\cdot)$ and $Y(\cdot)$ in \eqref{YX} and \eqref{XY}, we can obtain
\begin{equation}\label{X1Y1}
\left\{
\begin{aligned}
X(t)&=(I+\Pi_2\Pi_1)^{-1}\big[-\Pi_2\tilde{\phi}(t)+\tilde{\varphi}(t)\big],\\
Y(t)&=-(I+\Pi_1\Pi_2)^{-1}\big[\Pi_1\tilde{\varphi}(t)+\tilde{\phi}(t)\big].
\end{aligned}
\right.
\end{equation}
Combining this with the relations with regard to $Z(\cdot)$ in \eqref{Z1}, \eqref{Z2}, we can get
\begin{equation}\label{Pi2}
\left\{
\begin{aligned}
\dot{\Pi}_2(t)&=\Pi_{2}\hat{A}_1+\hat{A}_1^\top\Pi_2+\Pi_2\hat{F}_2\Pi_2-(\hat{B}_1+\Pi_2\hat{B}_2)R_2^{-1}(\hat{B}_1+\Pi_2\hat{B}_2)^\top\\
              &\quad-(\hat{D}_1+\Pi_2\hat{C}_1^\top)(I+\Pi_1\hat{S}_1)^{-1}\Pi_1(\hat{D}_1^\top+\hat{C}_1\Pi_2)+\hat{F}_{1},\\
    \Pi_{2}(0)&=\hat{G}_2,
\end{aligned}
\right.
\end{equation}
and $\tilde{\varphi}(\cdot)$ satisfy the following equation
\begin{equation}\label{tildevarphi}
\left\{
\begin{aligned}
d\tilde{\varphi}(t)&=\tilde{\beta}(t)dt+\tilde{\gamma}(t)dW(t),\\
\tilde{\varphi}(0)&=0,
\end{aligned}
\right.
\end{equation}
with
\begin{equation}\label{betagamma}
\left\{
\begin{aligned}
 \tilde{\beta}(t)&=\Big\{\hat{A}_1^\top+\Pi_2\hat{F}_2-(\hat{B}_1+\Pi_2\hat{B}_2)R_2^{-1}\hat{B}_2^\top-(\hat{D}_1+\Pi_2\hat{C}_1^\top)(I+\Pi_1\hat{S}_1)^{-1}\Pi_1\hat{C}_1\Big\}\tilde{\varphi}(t)\\
                 &\quad-(\hat{D}_1+\Pi_2\hat{C}_1^\top)(I+\Pi_1\hat{S}_1)^{-1}\tilde{\eta}(t),\\
\tilde{\gamma}(t)&=-\Big\{\hat{D}_1^\top(I+\Pi_1\Pi_2)^{-1}\Pi_1+(\Pi_2-\hat{S}_1)(I+\Pi_1\hat{S}_1)^{-1}\Pi_1\hat{D}_1^\top(I+\Pi_2\Pi_1)^{-1}\Pi_1\\
                 &\qquad-\hat{C}_1(I+\Pi_2\Pi_1)^{-1}-(\Pi_2-\hat{S}_1)(I+\Pi_1\hat{S}_1)^{-1}\Pi_1\hat{C}_1(I+\Pi_2\Pi_1)^{-1}\Big\}\tilde{\varphi}(t)\\
                 &\quad-\Big\{\hat{D}_1^\top(I+\Pi_1\Pi_2)^{-1}+(\Pi_2-\hat{S}_1)(I+\Pi_1\hat{S}_1)^{-1}\Pi_1\hat{D}_1^\top(I+\Pi_2\Pi_1)^{-1}\\
                 &\qquad+\hat{C}_1(I+\Pi_2\Pi_1)^{-1}\Pi_2+(\Pi_2-\hat{S}_1)(I+\Pi_1\hat{S}_1)^{-1}\Pi_1\hat{C}_1(I+\Pi_2\Pi_1)^{-1}\Pi_2\Big\}\tilde{\phi}(t)\\
                 &\quad+(\Pi_2-\hat{S}_1)(I+\Pi_1\hat{S}_1)^{-1}\tilde{\eta}(t).
\end{aligned}
\right.
\end{equation}

We have the following result.
\begin{theorem}
Under assumptions {\bf (L1)}, {\bf (L2)} and {\bf (L3)}, suppose the Riccati equations \eqref{Pi1} and \eqref{Pi2} admit differentiable solutions $\Pi_1(\cdot)$ and $\Pi_2(\cdot)$, respectively. Then {\bf Problem ($LQFBSDE_l$)} is solvable with the optimal strategy $\bar{u}_2(\cdot)$ being of a feedback representation
\begin{equation}\label{LC2}
\bar{u}_2(t)=-R_2^{-1}(t)\big[\hat{B}_1+\Pi_2\hat{B}_2\big]^\top Y(t)-R_2^{-1}(t)\hat{B}_2^\top\tilde{\varphi}(t),
\end{equation}
where $\tilde{\varphi}(\cdot)$ satisfies \eqref{tildevarphi}, and the optimal state trajectory $(Y(\cdot),Z(\cdot))$ satisfies the BSDE
\begin{equation}\label{BSDE3}
\left\{
\begin{aligned}
-dY(t)&=\Big\{\big[\hat{A}_1+\hat{F}_2\Pi_2-\hat{B}_2R_2^{-1}(t)\big(\hat{B}_1+\Pi_2\hat{B}_2\big)^\top\big]Y(t)+\hat{C}_1^\top Z(t)\\
      &\qquad+(\hat{F}_2-\hat{B}_2R_2^{-1}(t)\hat{B}_2^\top)\tilde{\varphi}(t)\Big\}dt-Z(t)dW(t),\\
  Y(T)&=\hat{\xi}.
\end{aligned}
\right.
\end{equation}
\end{theorem}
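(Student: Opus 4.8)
The plan is to argue by verification, in the spirit of \cite{Yong02}. Having pinned down the follower's optimal response through the feedback \eqref{FC1}, the leader faces the optimal control problem $(LQFBSDE_l)$ whose state is the coupled FBSDE \eqref{LeaderFBSDE} and whose cost is \eqref{cf2}. First I would record that the maximum principle applies in both directions: by Theorem~4.3 the leader's optimal control must satisfy the stationarity condition \eqref{OCL} with adjoint FBSDE \eqref{Ad1}, while by Theorem~3.4 --- applicable here because $h_2(y)=\frac{1}{2}\langle G_2y,y\rangle$ is convex ($G_2\ge0$) and $H_2$, viewed as a function of the state variables with the adjoint processes frozen, is the sum of the affine maps $\langle p,b\rangle$, $\langle k,\sigma\rangle$, $\langle q,f\rangle$ and the convex quadratic $L_2$ (using $Q_2,S_2\ge0$, $R_2>0$) --- any admissible $\bar u_2(\cdot)$ satisfying \eqref{OCL} and \eqref{Ad1} is optimal. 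Thus a control is optimal if and only if it extends to a solution of the coupled system \eqref{FBSDEs}, i.e., of \eqref{KWFBSDE} after the block rearrangement \eqref{definition}, and it suffices to exhibit one such solution and read off $\bar u_2(\cdot)$.

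Next I would carry out the construction. Assuming $\Pi_1(\cdot),\Pi_2(\cdot)$ solve the Riccati equations \eqref{Pi1}, \eqref{Pi2}, they are bounded on $[0,T]$, and so are all composite coefficient matrices built from them together with $A,B_1,B_2,C,P_1,P_2,R_i,S_i$; moreover, since $(I+\Pi_1\hat{S}_1)^{-1}$ already appears in \eqref{Pi1}, \eqref{Pi2}, the matrix $I+\Pi_1\hat{S}_1$ is invertible along the solutions, and because $\Pi_1(T)=0$ the matrices $I+\Pi_1\Pi_2$, $I+\Pi_2\Pi_1$ are invertible near $t=T$; I would extend this last invertibility to all of $[0,T]$ by the continuity/boundedness argument of \cite{Yong02}, \cite{LZ01}. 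Then \eqref{phi} is a linear BSDE with bounded coefficients and terminal value $-\hat{\xi}\in L_{\mathcal{F}_T}^2(\Omega,\mathbb{R}^{2n})$, uniquely solvable for $(\tilde{\phi}(\cdot),\tilde{\eta}(\cdot))$; feeding $(\tilde{\phi},\tilde{\eta})$ into \eqref{betagamma}, equation \eqref{tildevarphi} becomes a linear SDE with bounded coefficients and square-integrable inhomogeneity, uniquely solvable for $\tilde{\varphi}(\cdot)$. I would then define $X(\cdot),Y(\cdot)$ by \eqref{X1Y1}, take $Z(\cdot)$ to be the unique solution of the ($Z$-linear) algebraic relation in \eqref{Z1}, and set $\bar u_2(\cdot)$ by \eqref{LC2}.

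The verification step is then to confirm that this $(X,Y,Z,\bar u_2)$ solves \eqref{KWFBSDE}. Applying It\^o's formula to $Y=-\Pi_1X-\tilde{\phi}$ and to $X=\Pi_2Y+\tilde{\varphi}$, and inserting the Riccati equations \eqref{Pi1}, \eqref{Pi2} together with the definitions \eqref{phi}, \eqref{betagamma}, one checks term by term that the resulting drift and diffusion coefficients coincide with those in \eqref{KWFBSDE}; the boundary data match since $\Pi_1(T)=0$, $\tilde{\phi}(T)=-\hat{\xi}$ force $Y(T)=\hat{\xi}$, while $\Pi_2(0)=\hat{G}_2$, $\tilde{\varphi}(0)=0$ force $X(0)=\hat{G}_2Y(0)$; and \eqref{LC2} is exactly the last line of \eqref{KWFBSDE} once $X=\Pi_2Y+\tilde{\varphi}$ is substituted. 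Unravelling the block notation \eqref{definition} then returns the optimal tuple for $(LQFBSDE_l)$. Finally, substituting \eqref{LC2} and $X=\Pi_2Y+\tilde{\varphi}$ into the backward equation for $Y$ in \eqref{KWFBSDE} and collecting terms gives the closed linear BSDE \eqref{BSDE3} for $(Y,Z)$, which has bounded coefficients and square-integrable data, hence a unique solution; optimality of $\bar u_2(\cdot)$ follows from the sufficiency recorded at the outset.

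I expect the main obstacle to be the invertibility of $I+\Pi_1\Pi_2$ (equivalently $I+\Pi_2\Pi_1$) throughout $[0,T]$, which is what makes \eqref{X1Y1} meaningful and lets the decoupling go through; unlike the invertibility of $I+\Pi_1\hat{S}_1$, which is already implicit in the hypothesis (its inverse occurs in \eqref{Pi1}, \eqref{Pi2}), this one must be argued separately, and it is here that the standing assumption that \eqref{Pi1}, \eqref{Pi2} admit differentiable solutions on \emph{all} of $[0,T]$ is used decisively --- the argument would follow the continuity scheme of \cite{Yong02}, \cite{LZ01} starting from $\Pi_1(T)=0$. A secondary, purely bookkeeping difficulty is checking that the $Z$-relation produced by the It\^o expansion of $X$ in \eqref{Z2} is consistent with the one in \eqref{Z1} used to define $Z$ (this is forced by \eqref{X1Y1}), and that the lengthy coefficient-matching in the two It\^o expansions closes with no leftover terms.
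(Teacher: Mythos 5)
Your proposal follows essentially the same route as the paper: the paper's own proof is a three-sentence version of your construction --- solve \eqref{phi} for $(\tilde{\phi},\tilde{\eta})$, solve \eqref{tildevarphi} for $\tilde{\varphi}$, and read off \eqref{LC2} from the decoupling relations \eqref{YX}--\eqref{X1Y1} derived before the theorem. The additional care you take (the explicit appeal to the sufficiency result of Theorem 3.4, the It\^o verification that the constructed tuple solves \eqref{KWFBSDE}, and the flagged invertibility of $I+\Pi_1\Pi_2$ needed to make \eqref{X1Y1} meaningful) addresses points the paper leaves implicit or unaddressed rather than departing from its method.
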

\begin{proof}
For given $\xi$, let $\Pi_1(\cdot)$ satisfy \eqref{Pi1}, by the standard BSDE theory we can solve \eqref{phi} to obtain $(\tilde{\phi}(\cdot),\tilde{\eta}(\cdot))$. Let $\Pi_2(\cdot)$ satisfy \eqref{Pi2}, and by the standard SDE theory we can solve \eqref{tildevarphi} to obtain $\tilde{\varphi}(\cdot)$. Thus the feedback representation \eqref{LC2} can be obtained from the BSDE \eqref{BSDE3}. The proof is complete.
\end{proof}

We can also draw a flow chart to show the process in which we could represent the optimal feedback $\bar{u}_2(\cdot)$.\\

\begin{tikzpicture}[node distance=1cm]
\node (start) [startstop]
{
$$\left.
\begin{aligned}
&Given\ \xi\\
&\Pi_1\ satisfies\ \eqref{Pi1}
\end{aligned}
\right\}$$
};
\node (process1) [process, right of =start, xshift=4cm]
{
$$\left.
\begin{aligned}
&(\tilde{\phi},\tilde{\eta})\ satisfies\ \eqref{phi}\\
&\Pi_2\ satisfies\ \eqref{Pi2}
\end{aligned}
\right\}$$
};
\node (process2) [process,right of =process1,xshift=4cm] {$\tilde{\varphi}\ satisfies\ \eqref{tildevarphi}$};
\node (process3) [process,below of=process2,yshift=-3cm] {$(Y,Z)\ satisfies\ \eqref{X1Y1}$};
\node (process4) [process,below of =process3,right of=process2,xshift=-6cm,yshift=-3cm] {$\bar{u}_2\ satisfies\ \eqref{LC2}$};

\draw [arrow] (start) --(process1);
\draw [arrow] (process1) --(process2);
\draw [arrow] (process2) --(process3);
\draw [arrow] (process3) --(process4);
\draw [arrow] (process2) -- (process4);
\end{tikzpicture}

In the rest part of this section, we concentrate on the solvability of the two Riccati equations \eqref{Pi1}, \eqref{Pi2} of $\Pi_1(\cdot)$ and $\Pi_2(\cdot)$. For simplicity, we will just consider the constant coefficient case.

We first discuss the solvability for \eqref{Pi1} of $\Pi_1(\cdot)$. However, in this paper we consider only the case of $C=0$, and it is easy to get that $\hat{C}_1=\hat{D}_1=0$. Then we can rewrite \eqref{Pi1} to another form:
\begin{equation}\label{pi1}
\left\{
\begin{aligned}
&\dot{\Pi}_1(t)+(\hat{A}_1-\hat{B}_2R_2^{-1}\hat{B}_1^\top)\Pi_1(t)+\Pi_1(t)(\hat{A}_1^\top-\hat{B}_1R_2^{-1}\hat{B}_2^\top)\\
&\ +\Pi_1(t)(\hat{B}_1R_2^{-1}\hat{B}_1^\top-\hat{F}_1)\Pi_1(t)+\hat{B}_2R_2^{-1}\hat{B}_2^\top-\hat{F}_{2}=0,\\
&\Pi_1(T)=0.
\end{aligned}
\right.
\end{equation}

Then according to Theorem 5.2 of Yong \cite{Yong99}, we can obtain the following proposition.
\begin{mypro}
Let {\bf (L1)}, {\bf (L2)} hold, $C=0$, and ${\rm det}\bigg\{\begin{pmatrix}
0,&I
\end{pmatrix}e^{\mathcal{A}t}\begin{pmatrix}
0\\
I
\end{pmatrix}
\bigg\}>0,\ t\in[0,T]$ hold.
Then, \eqref{pi1} admits a unique solution $\Pi_1(\cdot)$ which has the following representation
\begin{equation}
\Pi_1(t)=-\Bigg [\begin{pmatrix}0,&I
\end{pmatrix}e^{\mathcal{A}(T-t)}\begin{pmatrix}
0\\
I
\end{pmatrix}\Bigg]^{-1}\begin{pmatrix}0,&I
\end{pmatrix}e^{\mathcal{A}(T-t)}\begin{pmatrix}
I\\
0
\end{pmatrix},
\end{equation}
where
\begin{equation}
\mathcal{A}\triangleq
\begin{pmatrix}
\hat{A}_1^\top-\hat{B}_1R_2^{-1}\hat{B}_2^\top&\hat{B}_1R_2^{-1}\hat{B}_1^\top-\hat{F}_1\\
\hat{F}_2-\hat{B}_2R_2^{-1}\hat{B}_2^\top&-\hat{A}_1+\hat{B}_2R_2^{-1}\hat{B}_1^\top
\end{pmatrix}.
\end{equation}
\end{mypro}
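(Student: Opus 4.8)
The plan is to reduce the matrix Riccati equation \eqref{pi1} to a linear matrix ODE system and then invoke the cited result of Yong \cite{Yong99}. First I would observe that under $C=0$ the coefficient matrices simplify drastically: $\hat{C}_1=\hat{D}_1=0$ (since every block of $\hat{C}_1$ and $\hat{D}_1$ carries a factor of $C$ or $C^\top$), so the quartic-looking term $(\hat{C}_1^\top-\Pi_1\hat{D}_1)(I+\Pi_1\hat{S}_1)^{-1}\Pi_1(\hat{C}_1-\hat{D}_1^\top\Pi_1)$ in \eqref{Pi1} vanishes identically. What remains is \eqref{pi1}, a symmetric-type Riccati ODE with constant coefficients on $[0,T]$ with terminal condition $\Pi_1(T)=0$; here I should also note that with $C=0$ the relevant block matrices are genuinely constant, so the "constant coefficient case" restriction announced before the proposition is exactly what makes $\mathcal{A}$ a constant matrix.

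Next I would set up the standard linearization. Introduce the $2n\times 2n$ (here the underlying dimension is $2n$ because of the stacking \eqref{definition}, but this is immaterial to the argument) linear system $\dot{\mathcal{X}}(t)=\mathcal{A}\,\mathcal{X}(t)$ with $\mathcal{X}=(\mathcal{X}_1^\top,\mathcal{X}_2^\top)^\top$, and look for $\Pi_1$ of the feedback-gain form $\mathcal{X}_1(t)=-\Pi_1(t)\,\mathcal{X}_2(t)$ after time-reversal $t\mapsto T-t$. Differentiating $\mathcal{X}_1=-\Pi_1\mathcal{X}_2$ and substituting the block rows of $\mathcal{A}$, reading off the identification of $\mathcal{A}$'s blocks with the coefficients of \eqref{pi1}: the $(1,1)$ block $\hat{A}_1^\top-\hat{B}_1R_2^{-1}\hat{B}_2^\top$, the $(1,2)$ block $\hat{B}_1R_2^{-1}\hat{B}_1^\top-\hat{F}_1$, the $(2,1)$ block $\hat{F}_2-\hat{B}_2R_2^{-1}\hat{B}_2^\top$, and the $(2,2)$ block $-\hat{A}_1+\hat{B}_2R_2^{-1}\hat{B}_1^\top$ — one checks the resulting equation for $\Pi_1$ is precisely \eqref{pi1} with the terminal condition translating into the initial condition $\Pi_1=0$ at the reversed time $0$, which forces $\mathcal{X}_1(0)=0$, i.e. the fundamental-solution columns $\bigl(\begin{smallmatrix}I\\0\end{smallmatrix}\bigr)$ and $\bigl(\begin{smallmatrix}0\\I\end{smallmatrix}\bigr)$ generate $\mathcal{X}_2(T-t)$ via $e^{\mathcal{A}(T-t)}\bigl(\begin{smallmatrix}0\\I\end{smallmatrix}\bigr)$. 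Then $\Pi_1(t)=-[\,(0,I)e^{\mathcal{A}(T-t)}(0,I)^\top\,]^{-1}(0,I)e^{\mathcal{A}(T-t)}(I,0)^\top$ is well-defined precisely when the indicated determinant is nonzero on $[0,T]$, which is the hypothesis; positivity of that determinant (as opposed to mere nonvanishing) follows by continuity from its value $1$ at $t=T$.

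The step I expect to be the main obstacle — or at least the one requiring the most care — is verifying that the hypotheses of Theorem 5.2 of \cite{Yong99} are met by the transformed equation \eqref{pi1}, and in particular tracking the symmetry: Yong's representation theorem typically requires that the Riccati equation be of the symmetric Hamiltonian type, meaning the off-diagonal blocks of $\mathcal{A}$ must themselves be symmetric and the diagonal blocks must be negative transposes of each other. Here one must check $\hat{B}_1R_2^{-1}\hat{B}_1^\top-\hat{F}_1$ and $\hat{F}_2-\hat{B}_2R_2^{-1}\hat{B}_2^\top$ are symmetric — which holds since $R_2>0$ is symmetric and $\hat{F}_1,\hat{F}_2$ are symmetric by construction (they are built from $Q_2$, $G_2$, $B_1R_1^{-1}B_1^\top$ in symmetric block patterns) — and that the $(2,2)$ block is minus the transpose of the $(1,1)$ block, which is immediate from the explicit forms. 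Once this structural check is in place, the conclusion is a direct citation: apply Theorem 5.2 of \cite{Yong99} to \eqref{pi1} to obtain existence, uniqueness, and the stated representation of $\Pi_1(\cdot)$, completing the proof.
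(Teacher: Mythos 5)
Your proposal is correct and follows essentially the same route as the paper: with $C=0$ one gets $\hat{C}_1=\hat{D}_1=0$, equation \eqref{Pi1} collapses to the constant-coefficient Riccati ODE \eqref{pi1}, and the representation then comes from the classical linearization (Radon's lemma) underlying Theorem 5.2 of \cite{Yong99}, which is all the paper itself invokes. Your additional check of the Hamiltonian block structure of $\mathcal{A}$ is harmless and in fact useful for reconciling the column-feedback ansatz $\mathcal{X}_1=-\Pi_1\mathcal{X}_2$ with the row-form formula $\Pi_1=-\big[(0,I)e^{\mathcal{A}(T-t)}(0,I)^\top\big]^{-1}(0,I)e^{\mathcal{A}(T-t)}(I,0)^\top$, though the representation itself does not require symmetry.
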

Then, we use the similar method to discuss the solvability for the Riccati equation \eqref{Pi2} of $\Pi_2(\cdot)$. Firstly we make the time reversing transformation:
\begin{equation*}
\tau=T-t,\ t\in[0,T].
\end{equation*}
Then, in the case of $C=0$, we can also obtain an equivalent form of \eqref{Pi2}:
\begin{equation}\label{tPi2}
\left\{
\begin{aligned}
&\dot{\Pi}_2(t)+\Pi_2(t)(\hat{A}_1-\hat{B}_2R_2^{-1}\hat{B}_1^\top)+(\hat{A}_1^\top-\hat{B}_1R_2^{-1}\hat{B}_2^\top)\Pi_2(t)\\
&\ +\Pi_2(t)(\hat{F}_2-\hat{B}_2R_2^{-1}\hat{B}_2^\top)\Pi_2(t)+\hat{F}_1-\hat{B}_1R_2^{-1}\hat{B}_1^\top=0,\\
&\Pi_2(T)=\hat{G}_2,
\end{aligned}
\right.
\end{equation}

Next, we introduce the following Riccati equation:
\begin{equation}\label{Pi21}
\left\{
\begin{aligned}
&\dot{\Pi}_{2,1}(t)+\Pi_{2,1}(t)\big[\hat{A}_1-\hat{B}_2R_2^{-1}\hat{B}_1^\top+(\hat{F}_2-\hat{B}_2R_2^{-1}\hat{B}_2^\top)\hat{G}_2\big]\\
&\ +\big[\hat{A}_1^\top-\hat{B}_1R_2^{-1}\hat{B}_2^\top+\hat{G}_2(\hat{F}_2-\hat{B}_2R_2^{-1}\hat{B}_2^\top)\big]\Pi_{2,1}(t)\\
&\ +\Pi_{2,1}(t)(\hat{F}_2-\hat{B}_2R_2^{-1}\hat{B}_2^\top)\Pi_{2,1}(t)+\hat{G}_2(\hat{A}_1-\hat{B}_2R_2^{-1}\hat{B}_1^\top)+(\hat{A}_1^\top-\hat{B}_1R_2^{-1}\hat{B}_2^\top)\hat{G}_2\\
&\ +\hat{G}_2(\hat{F}_2-\hat{B}_2R_2^{-1}\hat{B}_2^\top)\hat{G}_2+\hat{F}_{1}-\hat{B}_1R_2^{-1}\hat{B}_1^\top=0,\\
&\Pi_{2,1}(T)=\hat{G}_2.
\end{aligned}
\right.
\end{equation}
It is easy to see the solution $\Pi_{2,1}(\cdot)$ to \eqref{Pi21} and that $\Pi_2(\cdot)$ to \eqref{tPi2} are related by the following:
\begin{equation}\label{relation}
\Pi_2(t)=\hat{G}_2+\Pi_{2,1}(t),\ \ t\in[0,T].
\end{equation}
Therefore, similarly to the above discussion which is proved in \cite{Yong99}, we have the following result.
\begin{mypro}
Let {\bf (L1)}, {\bf (L2)}, {\bf (L3)} hold, $C=0$, and ${\rm det}\bigg\{\begin{pmatrix}
0,&I
\end{pmatrix}e^{\mathcal{B}t}\begin{pmatrix}
0\\
I
\end{pmatrix}
\bigg\}>0,\ t\in[0,T]$ hold. Then, \eqref{Pi21} admit unique solution $\Pi_{2,1}(\cdot)$ which has the following representation
\begin{equation}
\Pi_{2,1}(t)=-\Bigg [\begin{pmatrix}0,&I
\end{pmatrix}e^{\mathcal{B}(T-t)}\begin{pmatrix}
0\\
I
\end{pmatrix}\Bigg]^{-1}\begin{pmatrix}0,&I
\end{pmatrix}e^{\mathcal{B}(T-t)}\begin{pmatrix}
I\\
0
\end{pmatrix},
\end{equation}
where we let
\begin{equation}
\left\{
\begin{aligned}
\Phi&\triangleq\hat{A}_1-\hat{B}_2R_2^{-1}\hat{B}_1^\top+(\hat{F}_2-\hat{B}_2R_2^{-1}\hat{B}_2^\top)\hat{G}_2,\\
\Psi&\triangleq\hat{G}_2(\hat{A}_1-\hat{B}_2R_2^{-1}\hat{B}_1^\top)+(\hat{A}_1^\top-\hat{B}_1R_2^{-1}\hat{B}_2^\top)\hat{G}_2\\
    &\quad+\hat{G}_2(\hat{F}_2-\hat{B}_2R_2^{-1}\hat{B}_2^\top)\hat{G}_2+\hat{F}_{1}-\hat{B}_1R_2^{-1}\hat{B}_1^\top,\\
\mathcal{B}&\triangleq
\begin{pmatrix}
\Phi&\hat{F}_2-\hat{B}_2R_2^{-1}\hat{B}_2^\top\\
\Psi&-\Phi^\top
\end{pmatrix}.
\end{aligned}
\right.
\end{equation}
Furthermore, \eqref{relation} gives the solution $\Pi_2(\cdot)$ to the Riccati equation \eqref{Pi2}.
\end{mypro}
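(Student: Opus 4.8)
The plan is to run the same argument that proves the preceding Proposition: reduce \eqref{Pi2} to a standard symmetric matrix Riccati equation with normalised boundary data, solve that equation by linearisation, and then undo the reductions. \textbf{Step one.} Exploit the standing hypothesis $C=0$: inspecting the definitions of $\hat{C}_1$ and $\hat{D}_1$ one sees that every block carries a factor $C$ or $C^\top$, so $\hat{C}_1=\hat{D}_1=0$, which deletes the term $(\hat{D}_1+\Pi_2\hat{C}_1^\top)(I+\Pi_1\hat{S}_1)^{-1}\Pi_1(\hat{D}_1^\top+\hat{C}_1\Pi_2)$ from \eqref{Pi2}. Expanding the control term $-(\hat{B}_1+\Pi_2\hat{B}_2)R_2^{-1}(\hat{B}_1+\Pi_2\hat{B}_2)^\top$ into its quadratic, linear and constant parts and performing the time reversal $\tau=T-t$, which turns the initial condition $\Pi_2(0)=\hat{G}_2$ into a terminal one, produces exactly \eqref{tPi2}.

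\textbf{Step two.} Normalise the terminal datum by the affine substitution $\Pi_2=\hat{G}_2+\Pi_{2,1}$, the usual device for a Riccati equation with a non-zero boundary value. Since $\hat{G}_2$ is constant, $\dot{\Pi}_2=\dot{\Pi}_{2,1}$, and collecting in \eqref{tPi2} all the terms that pick up a factor $\hat{G}_2$ into the drift coefficients gives precisely the Riccati equation \eqref{Pi21} with the abbreviations $\Phi$ and $\Psi$; the relation \eqref{relation} then recovers a solution of \eqref{tPi2}, and hence of \eqref{Pi2} after reversing time, from a solution of \eqref{Pi21}.

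\textbf{Step three.} Solve \eqref{Pi21} by the linearisation that underlies Theorem~5.2 of Yong \cite{Yong99}. Using the symmetry of $R_1$, $R_2$, $P_2$, $Q_2$ and $G_2$, one checks that the quadratic coefficient $\hat{F}_2-\hat{B}_2R_2^{-1}\hat{B}_2^\top$ and the inhomogeneous coefficient $\Psi$ are symmetric, and that the linear part is governed by $\Phi$ on one side and by $\Phi^\top$ on the other, so that $\mathcal{B}$ has exactly the Hamiltonian block structure $\begin{pmatrix}\Phi & \hat{F}_2-\hat{B}_2R_2^{-1}\hat{B}_2^\top\\ \Psi & -\Phi^\top\end{pmatrix}$ required by that theorem. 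Its conclusion yields both the stated closed form of $\Pi_{2,1}(\cdot)$ in terms of $e^{\mathcal{B}(T-t)}$ and, under the non-degeneracy hypothesis $\det\{(0,I)e^{\mathcal{B}t}(0,I)^\top\}>0$ on $[0,T]$, the existence of this solution on the whole interval; uniqueness follows from the local Lipschitz continuity of the right-hand side of \eqref{Pi21}. Unwinding the affine shift through \eqref{relation} and the time reversal of Step one then returns the solution $\Pi_2(\cdot)$ of \eqref{Pi2}. The only genuine work is the bookkeeping in Step two — confirming that the substitution produces exactly the coefficients $\Phi$, $\Psi$ of \eqref{Pi21} and that the quadratic and constant coefficients it leaves are symmetric — since it is that symmetry which licenses the appeal to Yong's theorem; everything else is a direct citation.
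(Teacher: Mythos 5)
Your proposal follows essentially the same route as the paper: use $C=0$ to kill $\hat{C}_1,\hat{D}_1$, time-reverse \eqref{Pi2} into \eqref{tPi2}, shift by $\hat{G}_2$ to get \eqref{Pi21}, and invoke Theorem~5.2 of Yong \cite{Yong99} for the explicit representation via $e^{\mathcal{B}(T-t)}$, which is exactly the paper's (sketched) argument. Your remark that the affine substitution normalises the terminal datum to zero is in fact the consistent reading — the representation formula gives $\Pi_{2,1}(T)=0$, and the paper's stated condition $\Pi_{2,1}(T)=\hat{G}_2$ in \eqref{Pi21} appears to be a typo — so no correction to your proof is needed.
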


Finally, the optimal strategy $\bar{u}_1(\cdot)$ of the follower can also be represented in a similar way as $\bar{u}_2(\cdot)$. In fact, by \eqref{FC1}, \eqref{XY}, we have
\begin{equation}\label{FC2}
\begin{aligned}
\bar{u}_1(t)&=-R_1^{-1}(t)B_1(t)^\top[P_2(t)\bar{y}(t)+\bar{\varphi}(t)]\\
            &=-R_1^{-1}(t)B_1(t)^\top\big[(0,P_2(t))Y(t)+(I,0)X(t)\big]\\
            &=-R_1^{-1}(t)B_1(t)^\top\big[(0,P_2)+(I,0)\Pi_2\big]Y(t)-R_1^{-1}(t)B_1(t)^\top(I,0)\tilde{\varphi}(t).
\end{aligned}
\end{equation}

Up to now, we have obtained the optimal solution $(\bar{u}_1(\cdot),\bar{u}_2(\cdot))$ to the LQ Stackelberg game of BSDEs, whose state feedback representation are given by \eqref{FC2} and \eqref{LC2}.

\section{Application in finance}

In this section, we consider an optimal consumption rate problem of two players in the financial market, which can be formulated as an LQ Stackelberg game of BSDEs. Then the theoretic results in the previous sections can be applied, and in fact, it motivates us the study of our problems.

Suppose in the financial market, the investors have two tradable assets. One is a risk-free asset (bond or bank account) whose price dynamic is given by the following ordinary differential equation (ODE):
\begin{equation}
dS_0(t)=r(t)S_0(t)dt,\ t\geq0,\ S_0(0)=s_0,
\end{equation}
where $r(t)$ is called the interest rate. The other one is a risky asset (stock or investment fund) whose price dynamic is subject to the following SDE:

\begin{equation}
dS_1(t)=S_1(t)[\mu(t)dt+\sigma(t)dW(t)],\ t\geq0,\ S_1(0)=s_1,
\end{equation}
where $\mu(\cdot)$ and $\sigma(\cdot)$ are called the instantaneous rate of return and instantaneous volatility, respectively. In this section, we assume that the above market coefficients $r(\cdot)$, $\mu(\cdot)$ and $\sigma(\cdot)$ are deterministic and bounded functions, and $\mu(t)\geq r(t)$ for any $t\geq0$.

Suppose there are two agents (players) working together to invest the bond and the stock, whose decision has no influence on the prices in the financial market. One of the agents is the follower, who has an instantaneous consumption rate $c_1(\cdot)$, and the other one is the leader, who has an instantaneous consumption rate $c_2(\cdot)$.

Now, the two players wish to achieve a terminal wealth goal $\xi$ at the terminal time $T$, where $\xi$ is an $\mathcal{F}_T$-measurable non-negative square-integrable random variable. We use $\pi(\cdot)$ to represent the amount that the two players invest in the stock. Then the value $y(\cdot)$ of their wealth process is modeled by
\begin{equation}
\left\{
\begin{aligned}
&dy(t)=\big[r(t)y(t)+(\mu(t)-r(t))\pi(t)-c_1(t)-c_2(t)\big]dt+\pi(t)\sigma(t)dW(t),\ t\in[0,T],\\
&y(T)=\xi.
\end{aligned}
\right.
\end{equation}
If we set $z(\cdot)=\pi(\cdot)\sigma(\cdot)$, then we can rewrite the above equation as
\begin{equation}\label{y1}
\left\{
\begin{aligned}
-dy(t)&=\big[-r(t)y(t)-\frac{\mu(t)-r(t)}{\sigma(t)}z(t)+c_1(t)+c_2(t)\big]dt-z(t)dW(t),\ t\in[0,T],\\
  y(T)&=\xi.
\end{aligned}
\right.
\end{equation}
Let $\mathcal{U}_i=\big\{c_i(\cdot)\big|c_i(\cdot)\in L_\mathcal{F}^2(0,T;\mathbb{R}),t\in[0,T]\big\},i=1,2$, and each $c_i(\cdot)\in\mathcal{U}_i(\cdot)$ is called an admissible control. For any $(c_1(\cdot),c_2(\cdot))\in\mathcal{U}_1\times\mathcal{U}_2$, the BSDE \eqref{y1} admit a unique solution pair $(y(\cdot),z(\cdot))$ in $L_\mathcal{F}^2(0,T;\mathbb{R})\times L_\mathcal{F}^2(0,T;\mathbb{R})$.

Now, we define the associated expected utility functionals
\begin{equation}\label{cf example}
J_i(u_1(\cdot),u_2(\cdot);\xi)=\frac{1}{2}\mathbb{E}\int_0^TR_i(t)c_i^2(t)dt-G_iy^2(0),\ i=1,2,
\end{equation}
where $R_i(\cdot)>0$ is a deterministic function and $G_i\geq0$ is a constant. For the two players, it is natural to desire to maximize his expected utility functional representing cumulative consumption and to minimize the initial reserve.

This is an LQ Stackelberg game of BSDEs in the financial market. The target of this section is to find the optimal solution $(\bar{c}_1(\cdot),\bar{c}_2(\cdot))\in\mathcal{U}_1\times\mathcal{U}_2$, which is the Stackelberg equilibrium point, as in Definition 2.1.

Obviously, this problem can be regarded as a special case of that in Section 4. Therefore, we can use the results (Theorems 4.2 and 4.4) to solve it. Noting \eqref{BSDE}, \eqref{cf1} and \eqref{cf2}, we know in this section $A(t)=-r(t)$, $B_1(t)=B_2(t)=1$, $C(t)=-\frac{\mu(t)-r(t)}{\sigma(t)}$, $Q_1(t)=Q_2(t)=0$ and $S_1(t)=S_2(t)=0$, for any $t\in[0,T]$.

Firstly, we solve the follower's problem. For given $\xi$ and any $c_2(\cdot)\in\mathcal{U}_2$, using Theorem 4.2, we can get
\begin{equation}\label{c1}
\bar{c}_1(t)=-\frac{P_2(t)\bar{y}(t)+\varphi(t)}{R_1(t)},
\end{equation}
where $(\bar{y}(\cdot),\bar{z}(\cdot))$ satisfy the following BSDE:
\begin{equation}\label{y1optimal}
\left\{
\begin{aligned}
-d\bar{y}(t)&=\bigg\{-\Big[\frac{P_2(t)}{R_1(t)}+r(t)\Big]\bar{y}(t)-\frac{\mu(t)-r(t)}{\sigma(t)}z(t)-\frac{\varphi(t)}{R_1(t)}+c_2(t)\bigg\}dt-\bar{z}(t)dW(t),\\
  \bar{y}(T)&=\xi,
\end{aligned}
\right.
\end{equation}
$P_1(\cdot)$ and $P_2(\cdot)$ satisfy the following Riccati equations:
\begin{equation}\label{R1 example}
\left\{
\begin{aligned}
&\dot{P}_1(t)+\bigg[\Big(\frac{\mu(t)-r(t)}{\sigma(t)}\Big)^2-2r(t)\bigg]P_1(t)+\frac{1}{R_1(t)}=0,\\
&P_1(T)=0,
\end{aligned}
\right.
\end{equation}
\begin{equation}\label{R2 example}
\left\{
\begin{aligned}
&\dot{P}_2(t)+\bigg[\frac{1}{R_1(t)}+P_1(t)\Big(\frac{\mu(t)-r(t)}{\sigma(t)}\Big)^2\bigg]P_2^2(t)+2r(t)P_2(t)=0,\\
&P_2(0)=G_1,
\end{aligned}
\right.
\end{equation}
respectively, and $(\varphi(\cdot),\phi(\cdot),\eta(\cdot))$ satisfy the following FBSDE:
\begin{equation}\label{FBSDE example}
\left\{
\begin{aligned}
d\varphi(t)&=\bigg\{\Big[-\frac{P_2(t)}{R_1(t)}+\Big(\frac{\mu(t)-r(t)}{\sigma(t)}\Big)^2P_1(t)P_2(t)-r(t)\Big]\varphi(t)\\
           &\qquad+\frac{\mu(t)-r(t)}{\sigma(t)}P_2(t)\eta(t)+P_2(t)c_2(t)\bigg\}dt\\
           &\quad+\bigg\{-\frac{\mu(t)-r(t)}{\sigma(t)}\varphi(t)+\frac{\mu(t)-r(t)}{\sigma(t)}P_2(t)\phi(t)+P_2(t)\eta(t)\bigg\}dW(t),\\
  -d\phi(t)&=\bigg\{-r(t)\phi(t)-\frac{\mu(t)-r(t)}{\sigma(t)}\eta(t)-c_2(t)\bigg\}dt-\eta(t)dW(t),\\
\varphi(0)&=0,\ \phi(T)=-\xi.
\end{aligned}
\right.
\end{equation}

Next, we solve the leader's problem. Noting \eqref{definition} and putting
\begin{equation*}
\hat{A}_1(t)=
\begin{pmatrix}
-r(t)-\frac{P_2(t)}{R_1(t)}&0\\
0&-r(t)-\frac{P_2(t)}{R_1(t)}
\end{pmatrix},\ \
\hat{B}_1(t)=
\begin{pmatrix}
P_2(t)\\
0
\end{pmatrix},\ \
\hat{B}_2=
\begin{pmatrix}
0\\
1
\end{pmatrix},\ \
\end{equation*}
\begin{equation*}
\hat{C}_1(t)=
\begin{pmatrix}
\frac{-1-P_2^2(t)P_1^2(t)-P_2(t)P_1(t)}{P_1(t)P_2(t)+1}\frac{\mu(t)-r(t)}{\sigma(t)}&0\\
0&-\frac{\mu(t)-r(t)}{\sigma(t)}
\end{pmatrix},
\end{equation*}
\begin{equation*}
\hat{D}_1(t)=
\begin{pmatrix}
0&-P_2(t)\frac{\mu(t)-r(t)}{\sigma(t)}\\
-P_2(t)\frac{\mu(t)-r(t)}{\sigma(t)}&0
\end{pmatrix},
\end{equation*}
\begin{equation*}
\end{equation*}\begin{equation*}
\hat{F}_1(t)=
\begin{pmatrix}
0&\big(\frac{\mu(t)-r(t)}{\sigma(t)}\big)^2P_2^2(t)P_1(t)\\
\big(\frac{\mu(t)-r(t)}{\sigma(t)}\big)^2P_2^2(t)P_1(t)&0
\end{pmatrix},\ \
\end{equation*}
\begin{equation*}
\hat{F}_2(t)=
\begin{pmatrix}
0&-\frac{1}{R_1(t)}\\
-\frac{1}{R_1(t)}&0
\end{pmatrix},\ \
\hat{S}_1(t)=
\begin{pmatrix}
0&-P_2(t)\\
-P_2(t)&0
\end{pmatrix},
\end{equation*}
by Theorem 4.4, we can get
\begin{equation}\label{c2}
\bar{c}_2(t)=-\frac{\big[\hat{B}_1(t)+\Pi_2(t)\hat{B}_2(t)\big]^\top Y(t)}{R_2(t)}-\frac{\hat{B}_2^\top\tilde{\varphi}(t)}{R_2(t)},
\end{equation}
where $(\bar{Y}(\cdot),\bar{Z}(\cdot))$ satisfy the following 2-dimensional BSDE:
\begin{equation}\label{YZ example}
\left\{
\begin{aligned}
-dY(t)&=\Bigg\{\bigg[\begin{pmatrix}-r(t)-\frac{P_2(t)}{R_1(t)}&0\\-\frac{P_2(t)}{R_2(t)}&-r(t)-\frac{P_2(t)}{R_1(t)}\end{pmatrix}
       +\begin{pmatrix}0&-\frac{1}{R_1(t)}\\-\frac{1}{R_1(t)}&-\frac{1}{R_2(t)}\end{pmatrix}\Pi_2(t)\bigg]Y(t)\\
      &\qquad+\begin{pmatrix}\frac{-1-P_2^2(t)P_1^2(t)-P_2(t)P_1(t)}{P_1(t)P_2(t)+1}\frac{\mu(t)-r(t)}{\sigma(t)}&0\\0&-\frac{\mu(t)-r(t)}{\sigma(t)}\end{pmatrix}Z(t)\\
      &\qquad+\begin{pmatrix}0&-\frac{1}{R_1(t)}\\-\frac{1}{R_1(t)}&-\frac{1}{R_2(t)}\end{pmatrix}\tilde{\varphi}(t)\Bigg\}dt-Z(t)dW(t),\\
  Y(T)&=\hat{\xi},
\end{aligned}
\right.
\end{equation}
$\Pi_1(\cdot)$ and $\Pi_2(\cdot)$ satisfy the 2-dimensional Riccati equations \eqref{Pi1}, \eqref{Pi2} respectively, and $(\tilde{\varphi}(\cdot),\tilde{\phi}(\cdot),\tilde{\eta}(\cdot))$ satisfy the 2-dimensional FBSDE \eqref{tildevarphi}, \eqref{phi}.

Thus, $(\bar{c}_1(\cdot),\bar{c}_2(\cdot))$ determined by \eqref{c1} and \eqref{c2} is a Stackelberg equilibrium point of our game problem of BSDEs. Moreover, by a dual technique similar to \cite{EPQ97}, from \eqref{YZ} we have
\begin{equation}\label{explicit}
Y(t)=\mathbb{E}\Bigg[\hat{\xi}\Gamma_t(T)+\int_t^T\Bigg\{\begin{pmatrix}0&-\frac{1}{R_1(s)}\\-\frac{1}{R_1(s)}&-\frac{1}{R_2(s)}\end{pmatrix}\tilde{\varphi}(s)\Bigg\}\Gamma_t(s)ds\Bigg|\mathcal{F}_t\Bigg],
\end{equation}
where for $t\in[0,T]$, $\Gamma_t(\cdot)$ is the unique solution to
\begin{equation*}
\left\{
\begin{aligned}
d\Gamma_t(s)&=\bigg[\begin{pmatrix}-r(s)-\frac{P_2(s)}{R_1(s)}&0\\-\frac{P_2(s)}{R_2(s)}&-r(s)-\frac{P_2(s)}{R_1(s)}\end{pmatrix}
             +\begin{pmatrix}0&-\frac{1}{R_1(s)}\\-\frac{1}{R_1(s)}&-\frac{1}{R_2(s)}\end{pmatrix}\Pi_2(s)\bigg]\Gamma_t(s)ds\\
            &\quad+\begin{pmatrix}\frac{-1-P_2^2(s)P_1^2(s)-P_2(s)P_1(s)}{P_1(s)P_2(s)+1}\frac{\mu(s)-r(s)}{\sigma(s)}&0\\0&-\frac{\mu(s)-r(s)}{\sigma(s)}\end{pmatrix}\Gamma_t(s)dW(s),\ s\in[t,T],\\
 \Gamma_t(t)&=1,
\end{aligned}
\right.
\end{equation*}
or explicitly,
\begin{equation}\label{Gamma}
\begin{aligned}
\Gamma_t(s)&=\exp\Bigg\{\int_t^s\bigg[\begin{pmatrix}-r(\tau)-\frac{P_2(\tau)}{R_1(\tau)}&0\\-\frac{P_2(\tau)}{R_2(\tau)}&-r(\tau)-\frac{P_2(\tau)}{R_1(\tau)}\end{pmatrix}
             +\begin{pmatrix}0&-\frac{1}{R_1(\tau)}\\-\frac{1}{R_1(\tau)}&-\frac{1}{R_2(\tau)}\end{pmatrix}\Pi_2(\tau)\\
           &\qquad\qquad\qquad+\frac{1}{2}\begin{pmatrix}\frac{-1-P_2^2(\tau)P_1^2(\tau)-P_2(\tau)P_1(\tau)}{P_1(\tau)P_2(\tau)+1}
            \frac{\mu(\tau)-r(\tau)}{\sigma(\tau)}&0\\0&-\frac{\mu(\tau)-r(\tau)}{\sigma(\tau)}\end{pmatrix}^2\bigg]d\tau\\
           &\qquad\quad+\int_t^s\begin{pmatrix}\frac{-1-P_2^2(\tau)P_1^2(\tau)-P_2(\tau)P_1(\tau)}{P_1(\tau)P_2(\tau)+1}
            \frac{\mu(\tau)-r(\tau)}{\sigma(\tau)}&0\\0&-\frac{\mu(\tau)-r(\tau)}{\sigma(\tau)}\end{pmatrix}\Gamma_t(\tau)dW(\tau)\Bigg\},\ s\in[t,T].\\
\end{aligned}
\end{equation}
Then the optimal initial wealth reserve $\bar{y}(0)$ is the second component of the following 2-dimensional vector
\begin{equation}\label{initial reserve}
Y(0)=\mathbb{E}\Bigg[\hat{\xi}\Gamma_0(T)+\int_0^T\Bigg\{\begin{pmatrix}0&-\frac{1}{R_1(t)}\\-\frac{1}{R_1(t)}&-\frac{1}{R_2(t)}\end{pmatrix}\tilde{\varphi}(t)\Bigg\}\Gamma_0(t)dt\Bigg].
\end{equation}

\section{Concluding Remarks}

In this paper, we have studied a new class of Stackelberg game of BSDEs, with deterministic coefficients and convex control domain. Necessary and sufficient conditions of the optimality for the follower and the leader are first given for the general problem. Then an LQ Stackelberg game of BSDEs is investigated under standard assumptions. The state feedback representation for the optimal control of the follower is first given via two Riccati equations, a BSDE and a SDE. Then the leader's problem is formulated as an optimal control problem of FBSDE with the control-independent diffusion term. Two high-dimensional Riccati equations, a high-dimensional BSDE and a high-dimensional SDE are introduced to represent the state feedback for the optimal control of the leader. The solvability of the four Riccati equations are discussed. Theoretic results are applied to an optimal consumption rate problem of two players in the financial market.

An outstanding open problem to study is the Stackelberg game of BSDEs where all the coefficients are random (as in \cite{Yong02}). In this case, the Riccati equations become nonlinear BSDEs (rather than ODEs as in this paper), and the solvability of them is very challenging to prove.

\end{document}